\crefname{equation}{}{} 
\pgfplotsset{compat=1.18}
\newtheorem{corollary}{Corollary}
\newtheorem{proposition}{Proposition}
\newtheorem{definition}{Definition}
\newtheorem{theorem}{Theorem}
\newtheorem{remark}{Remark}
\newtheorem{lemma}{Lemma}
\crefname{lemma}{Lemma}{Lemmas} 
\Crefname{lemma}{Lemma}{Lemmas}
\newcommand*\circled[1]{\tikz[baseline=(char.base)]{\node(char)[shape=rounded rectangle,draw,inner sep=0.6pt,minimum height=1.5ex]{#1};}} %
\newcommand\kronF[2]{{#1}^{\circled{\tiny{\ensuremath{#2}}}}} 
\newcommand*\squared[1]{\tikz[baseline=(char.base)]{\node(char)[shape=rectangle,draw,inner sep=0.6pt,minimum height=1.5ex]{#1};}} %
\newcommand\kronFMin[2]{{#1}^{\squared{\tiny{\ensuremath{#2}}}}} 
\renewcommand\Vec[1][]{\textnormal{vec}\ensuremath{\if$#1$ \else \left[#1\right]\fi}}
\newcommand\Vech[1][]{\textnormal{vech}\ensuremath{\if$#1$ \else \left[#1\right]\fi}}
\newcommand\UnVec[1][]{\textnormal{unvec}\ensuremath{\if$#1$ \else \left[#1\right]\fi}}
\newcommand{\real}{\mathbb{R}}
\newcommand{\rd}{\text{\upshape d}} 
\newcommand{\exeq}{\overset{!}{=}}
\newcommand{\bzero}{\ensuremath{\mathbf{0}}} 
\newcommand{\bA}{\ensuremath{\mathbf{A}}}
\newcommand{\bB}{\ensuremath{\mathbf{B}}}
\newcommand{\bC}{\ensuremath{\mathbf{C}}}
\newcommand{\bD}{\ensuremath{\mathbf{D}}}
\newcommand{\bG}{\ensuremath{\mathbf{G}}}
\newcommand{\bH}{\ensuremath{\mathbf{H}}}
\newcommand{\bI}{\ensuremath{\mathbf{I}}}
\newcommand{\bL}{\ensuremath{\mathbf{L}}}
\newcommand{\bM}{\ensuremath{\mathbf{M}}}
\newcommand{\bN}{\ensuremath{\mathbf{N}}}
\newcommand{\bP}{\ensuremath{\mathbf{P}}}
\newcommand{\bR}{\ensuremath{\mathbf{R}}}
\newcommand{\bT}{\ensuremath{\mathbf{T}}}
\newcommand{\bV}{\ensuremath{\mathbf{V}}}
\newcommand{\bW}{\ensuremath{\mathbf{W}}}
\renewcommand{\bf}{\ensuremath{\mathbf{f}}}
\newcommand{\bg}{\ensuremath{\mathbf{g}}}
\newcommand{\bh}{\ensuremath{\mathbf{h}}}
\newcommand{\bp}{\ensuremath{\mathbf{p}}}
\newcommand{\br}{\ensuremath{\mathbf{r}}}
\newcommand{\bu}{\ensuremath{\mathbf{u}}}
\newcommand{\bv}{\ensuremath{\mathbf{v}}}
\newcommand{\bw}{\ensuremath{\mathbf{w}}}
\newcommand{\bx}{\ensuremath{\mathbf{x}}}
\newcommand{\by}{\ensuremath{\mathbf{y}}}
\newcommand{\bz}{\ensuremath{\mathbf{z}}}
\newcommand{\bPhi}{\mbox{\boldmath $\Phi$}}
\newcommand{\bSigma}{\ensuremath{\boldsymbol{\Sigma}}}
\newcommand{\cA}{\ensuremath{\mathcal{A}}}
\newcommand{\cE}{\ensuremath{\mathcal{E}}}
\newcommand{\cH}{\ensuremath{\mathcal{H}}}
\newcommand{\cT}{\ensuremath{\mathcal{T}}}
\newcommand{\cU}{\ensuremath{\mathcal{U}}}
\newcommand{\cV}{\ensuremath{\mathcal{V}}}
\newcommand{\cW}{\ensuremath{\mathcal{W}}}
\newcommand{\F}{\ensuremath{\mathbf{F}}}
\newcommand{\nomathindent}{\setlength{\mathindent}{0cm}}
\begin{document}
\let\WriteBookmarks\relax
\def\floatpagepagefraction{1}
\def\textpagefraction{.001}

\shorttitle{Input-normal/output-diagonal transformation}

\shortauthors{N. A. Corbin, A. Sarkar, J. M. A. Scherpen, B. Kramer}

\title[mode = title]{Scalable computation of input-normal/output-diagonal balanced realization for control-affine polynomial systems}

\author[1]{Nicholas A. Corbin}[
    orcid=0000-0002-0621-112X,
    linkedin=nick-corbin]

\cormark[1]

\ead{ncorbin@ucsd.edu}

\ead[url]{https://cnick1.github.io/}

\affiliation[1]{organization={University of California San Diego},
    addressline={9500 Gilman Drive},
    city={La Jolla},
    postcode={92093},
    state={CA},
    country={USA}}

\cortext[1]{Corresponding author}

\author[2]{Arijit Sarkar}[
    orcid=0000-0003-2027-9566,
    linkedin=arijit-sarkar-a2013413a]

\affiliation[2]{organization={Jan C. Willems Center for Systems and Control, Engineering and Technology Institute Groningen, University of Groningen},
    addressline={Nijenborgh 4},
    city={Groningen},
    postcode={9747},
    state={AG},
    country={The Netherlands}}

\ead{a.sarkar@rug.nl}

\author[2]{Jacquelien M. A. Scherpen}[
    orcid=0000-0002-3409-5760,
    linkedin=jacquelien-scherpen-6b0bb06]

\ead{j.m.a.scherpen@rug.nl}

\author[1]{Boris Kramer}[
    orcid=0000-0002-3626-7925,
    linkedin=kramerboris]
\ead{bmkramer@ucsd.edu}

\begin{abstract}
    We present a scalable tensor-based approach to computing input-normal/output-diagonal nonlinear balancing transformations for control-affine systems with polynomial nonlinearities.
    This transformation is necessary to determine the states that can be truncated when forming a reduced-order model.
    Given a polynomial representation for the controllability and observability energy functions,
    we derive the explicit equations to compute a polynomial transformation to induce input-normal/output-diagonal structure in the energy functions in the transformed coordinates.
    The transformation is computed degree-by-degree, similar to previous Taylor-series approaches in the literature.
    However, unlike previous works, we provide a detailed analysis of the transformation equations in Kronecker product form to enable a scalable implementation.
    We derive the explicit algebraic structure for the equations, present rigorous analyses for the solvability and algorithmic complexity of those equations, and provide general purpose open-source software implementations for the proposed algorithms to stimulate broader use of nonlinear balanced truncation model reduction.
    We demonstrate that with our efficient implementation, computing the nonlinear transformation is approximately as expensive as computing the energy functions.
\end{abstract}

\begin{keywords}
    model reduction \sep balanced truncation \sep nonlinear dynamical systems
\end{keywords}

\maketitle

\section{Introduction}\label{sec:intro}
With the advent of new technologies, engineering systems are rapidly becoming more and more complex.
Due to their complexity, the associated mathematical models used to design, analyze, and control these engineering systems are often nonlinear and high-dimensional.
This demands the systematic development of reduced-order models (ROMs) which impose less computational burden and can preserve certain properties of the high-dimensional system.

Balanced truncation is a system-theoretic model reduction method developed for control systems because of its innate ability to preserve stability, controllability, and observability.
It was initially introduced for linear time-invariant (LTI) systems by \citet{Mullis1976} and \citet{Moore1981}.
Several different types of balanced realizations and model reduction for LTI systems have been introduced whose detailed exposition can be found in \citet{Gugercin2004}.
Scherpen extended the notion of balancing to stable nonlinear systems in \cite{Scherpen1993,Scherpen1994a}, unstable nonlinear systems in \cite{Scherpen1994},
and $\cH_\infty$ balancing for closed-loop systems was introduced in \cite{Scherpen1996}.
Shortly thereafter, the connection between the nonlinear balancing controllability/observability energy functions and minimality was highlighted in \cite{Gray2000}. In the roughly 10 years that followed, the nonlinear balancing theory was refined to address certain peculiarities that do not appear in linear balancing, particularly relating to the nonlinear coordinate transformations required to balance the nonlinear energy functions \cite{Gray2001,Fujimoto2001a,Fujimoto2003,Fujimoto2005,Fujimoto2010}.

A key step in both linear and nonlinear balancing is the transformation of the system into input-normal/output-diagonal form, in which all of the controllability and observability information coalesces into the squared singular values found along the ``diagonal'' of the transformed observability function.
In nonlinear balancing, however, these squared singular values appear as functions of the state.
The original \emph{smooth singular value functions} \cite{Scherpen1993,Scherpen1994a,Scherpen1994,Scherpen1996} were shown to be non-unique in \cite{Gray2001}, and their connection with the Hankel operator was not clear.
These issues were then resolved in several works by \citet{Fujimoto2001a,Fujimoto2003,Fujimoto2005,Fujimoto2010} with the definition of the \emph{axis singular value functions}.
Unlike the original smooth singular value functions which had full state dependence, each axis singular value function only depends on one component of the state\footnote{
    The differing definitions of singular value functions have led to some ambiguity in the literature regarding the definition of input-normal/output-diagonal realizations for nonlinear systems.
    In this work, only the realization with axis singular value functions will be considered truly input-normal/output-diagonal. }.
With these results, and in particular the nonlinear operator-based formulation presented in \cite{Fujimoto2010}, which generalizes to discrete-time systems, the nonlinear balancing \emph{theory} has been considered mature for some time.

However, the practical application of nonlinear balanced truncation has remained limited, largely because of three computational challenges.
First, nonlinear balancing demands solving a Lyapunov-like and a Hamilton-Jacobi-Bellman (HJB) partial differential equation (PDE) for the controllability and observability functions associated with the nonlinear system.
Second, the theory requires computing a nonlinear coordinate transformation to first put the system in an input-normal/output-diagonal form, followed by another nonlinear scaling of the states to achieve a fully balanced form.
Third, in the transformed coordinates, the ROM must be formed and simulated efficiently.

Regarding the first challenge, tensor-based scalable algorithms are built in \cite{Krener2013,Breiten2018,Almubarak2019,Borggaard2020,Borggaard2021} based on the method of Al'brekht \cite{Albrekht1961,Lukes1969} to solve HJB PDEs by expanding the energy functions as a Taylor series\footnote{While many other approaches exist for solving HJB PDEs, only the Taylor series approach has been shown to be compatible with computing balancing transformations.}.
\citet{Kramer2024} extended this approach to specifically compute the energy functions associated with nonlinear balancing for models with quadratic drift nonlinearity.
Subsequently, \citet{Corbin2023} provided the full generalization to models with arbitrary polynomial nonlinearity in the drift, input, and output maps.
Towards the second challenge, \citet{Fujimoto2006} proposed to approximate the nonlinear coordinate transformation as a Taylor series.
Similar to how the polynomial coefficients of the energy function can be computed degree-by-degree using Al'brekht's method, they show that the coordinate transformation coefficients can also be computed degree-by-degree.
However, the method is merely outlined abstractly; conditions for the existence of the solutions to the coefficients, details regarding the practical computation of the transformation coefficients, and a scalable implementation as needed for model reduction are not provided.
Independently, \citet{Krener2008} also presented a Taylor series-based, degree-by-degree approach to computing polynomial input-normal/output-diagonal transformations.
That work includes a more precise analysis regarding the existence of solutions to the transformation coefficients.
It also explains more about uniqueness considerations for the singular value functions computed using this approach.
While the core idea is the same as in \cite{Fujimoto2006}, Krener
more precisely describes how individual components of the polynomial transformation work to eliminate particular monomial components and put the system in input-normal/output-diagonal form.
In practice,
building the transformation in this piece-by-piece manner is cumbersome and not scalable.
As a result, nonlinear balancing to date has yet to be demonstrated on systems larger than $n=6$ \cite{Krener2008}.
Consequently, the third challenge of efficiently forming and simulating ROMs has also never been addressed for systems larger than $n=6$; we plan to address this in future work.

This paper contributes a \textit{scalable} method for the second challenge of finding a polynomial transformation based on the Kronecker product, thus improving on the approaches of \cite{Fujimoto2006,Krener2008}.
\cref{sec:main-result} presents the explicit algebraic equations for the input-normal/output-diagonal transformation coefficients.
\cref{sec:secondary-results} provides a detailed proof for the existence of a transformation satisfying those equations.
This section also includes an analysis of the computational complexity of the proposed approach based on an efficient implementation leveraging tensor structure in the equations.
In \cref{sec:results}, we demonstrate our proposed approach first with a two-dimensional academic example from the literature, and then with a coupled mass-spring-damper system to demonstrate the scalability of the approach.
Finally, we draw conclusions in \cref{sec:conclusion}.

\section{Preliminaries}\label{sec:preliminaries}
A summary of the most relevant results from nonlinear balancing theory is presented in \cref{sec:nl-balancing-theory}.
Then in \cref{sec:notation},
we define several concepts relating to Kronecker product algebra that we leverage for efficient computation.

\subsection{Nonlinear balancing theory}\label{sec:nl-balancing-theory}
Consider the nonlinear control-affine dynamical system
\begin{equation}\label{eq:FOM-NL}
    \dot{\bx}(t)  = \bf(\bx(t))  + \bg(\bx(t)) \bu(t), \qquad
    \by(t)        = \bh(\bx(t)),
\end{equation}
where $\bx(t) \in \real^n$ is the state, $\bf \colon \real^n \to \real^n$ is the nonlinear drift vector field, $\bg \colon \real^n \to \real^{n \times m}$ is the matrix-valued function whose columns are nonlinear input vector fields corresponding to each control input, $\bh \colon \real^n \to \real^{p}$ is the nonlinear output map, $\bu(t) \in \real^m$ is a vector of input signals, and $\by(t) \in \real^p$ is a vector of output signals.

Following \citet{Scherpen1993}, the controllability and observability energy functions for an asymptotically stable nonlinear system \cref{eq:FOM-NL} are
\footnote{We focus on the ``open-loop'' nonlinear balancing theory to simplify notation.
    However, all of the results in this article extend to the more general closed-loop and $\cH_\infty$ balancing frameworks \cite{Scherpen1994,Scherpen1996}.}
\begin{align*}
    \cE_c(\bx_0) & \coloneqq
    \!\!\!\! \min_{\substack{\bu \in L_{2}
            (-\infty, 0)
    \\ \bx(-\infty) = \bzero,\,\,   \bx(0) = \bx_0}}
    \!\!
    \frac{1}{2} \int\displaylimits_{-\infty}^{0} \Vert \bu(t) \Vert^2 \rd t,
    \hspace{-2cm}
    \\
    \cE_o(\bx_0) & \coloneqq \frac{1}{2} \int\displaylimits_{0}^{\infty} \Vert \by(t) \Vert^2  \rd t,
    \begin{split}
         & \qquad\bx(0) = \bx_0, \\&\bu(t) \equiv \bzero, \,\,0 \leq t < \infty.
    \end{split}
\end{align*}
Assuming that these functions exist, i.e. are finite, and are smooth,
$\cE_c(\bx)$
is the unique solution to the
Hamilton-Jacobi equation
\begin{equation}\label{eq:HJ-Equation}
    \begin{split}
         & 0 =  \frac{\partial \cE_c(\bx)}{\partial \bx} \bf(\bx) + \frac{1}{2}  \frac{\partial \cE_c(\bx)}{\partial \bx} \bg(\bx) \bg(\bx)^\top \frac{\partial^\top \cE_c(\bx)}{\partial \bx}, \\
         & \cE_c(\bzero) = 0,
    \end{split}
\end{equation}
under the assumption that $0$ is an asymptotically stable equilibrium of $-\left(\bf(\bx)+\bg(\bx)\bg(\bx)^\top\frac{\partial^\top\cE_c(\bx)}{\partial \bx}\right)$, and $\cE_o(\bx)$ is the unique solution to the nonlinear Lyapunov-like equation
\begin{equation}\label{eq:Nonlin-Lyap}
    \begin{split}
         & 0 =  \frac{\partial \cE_o(\bx)}{\partial \bx} \bf(\bx) + \frac{1}{2}\bh(\bx)^\top \bh(\bx), \\
         & \cE_o(\bzero) = 0.
    \end{split}
\end{equation}
The next theorem states the existence of a coordinate transformation in which the energy functions take on a special \emph{input-normal/output-diagonal} form.
\begin{theorem}\cite[Thm. 8]{Fujimoto2010} \label{thm:inputnormaloutputdiagonal}
    Suppose the Jacobian linearization of the nonlinear system \cref{eq:FOM-NL} is minimal, asymptotically stable, and has distinct Hankel singular values.
    Then there is a neighborhood $\cW$ of the origin and a smooth coordinate transformation $\bx = \Phi(\bz)$ on $\cW$ with $\bz = [z_1, z_2, \dots, z_n]$ such that the controllability and observability energy functions have \emph{input-normal/output-diagonal} form:
    \small
    \begin{align}
        \hspace*{-.5cm} \cE_c(\Phi(\bz)) & = \frac{1}{2} \bz^\top \bz             &  & = \frac{1}{2} \sum_{i=1}^n  z_i^2,
        \\
        \hspace*{-.5cm} \cE_o(\Phi(\bz)) & = \frac{1}{2} \bz^\top \begin{bmatrix}
                                                                      \sigma^2_1(z_1)             \\
                                                                       & \ddots                   \\
                                                                       &        & \sigma^2_n(z_n) \\
                                                                  \end{bmatrix}\bz &  & = \frac{1}{2} \sum_{i=1}^n  z_i^2 \sigma_i^2(z_i). \label{eq:in-od-definition}
    \end{align}
    \normalsize
\end{theorem}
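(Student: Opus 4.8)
The plan is to build $\Phi$ in two stages: first normalize $\cE_c$ to $\tfrac12\bz^\top\bz$, and then, using only the coordinate changes that preserve this normalization, diagonalize $\cE_o$ into the axis form \cref{eq:in-od-definition}. I would start from the quadratic data. Minimality and asymptotic stability of the Jacobian linearization make the linear controllability and observability Gramians positive definite, so the Hessians of $\cE_c$ and $\cE_o$ at the origin (governed by these Gramians) are nondegenerate. The squared Hankel singular values are the eigenvalues of the product of the two Gramians; the distinctness hypothesis makes them simple, which fixes the leading values $\sigma_i^2(0)$ and, more importantly, prevents eigenvalue crossings in a neighborhood of the origin.

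For the first stage I would apply the Morse lemma to $\cE_c$: since it has a nondegenerate minimum at the origin, there are smooth local coordinates in which $\cE_c = \tfrac12\bz^\top\bz$ holds identically. The coordinate changes that leave this intact are exactly the norm-preserving maps $\Psi$ with $\Vert\Psi(\bz)\Vert=\Vert\bz\Vert$, still a rich family near the origin, and I would draw every subsequent transformation from this class so that input-normality is never disturbed.

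For the second stage I would first diagonalize the Hessian of $\cE_o$ by a constant orthogonal rotation --- possible with a genuinely orthogonal matrix because the eigenvalues $\sigma_i^2(0)$ are distinct --- placing the squared Hankel singular values on the diagonal at leading order. The higher-degree terms would then be removed one degree at a time, matching the structure developed in \cref{sec:main-result}: at each degree I would substitute a polynomial norm-preserving transformation into the Lyapunov-like equation \cref{eq:Nonlin-Lyap} and solve a linear system for its unknown coefficients. The distinctness of the $\sigma_i^2(0)$ is precisely what makes the resulting Sylvester-type operators nonsingular, so that every cross-coordinate and off-axis monomial of $\cE_o$ can be annihilated, leaving only the desired terms $\tfrac12 z_i^2\sigma_i^2(z_i)$.

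I expect the difficulty to lie in two places. First, one must check that the constrained norm-preserving transformations retain enough freedom to eliminate \emph{all} cross-coordinate dependence and produce singular value functions that each depend on a single coordinate --- a purely nonlinear phenomenon with no linear analogue, and exactly where simplicity of the Hankel singular values is indispensable. Second, the degree-by-degree scheme yields only a formal power series a priori, so the crux of the argument is to show that it sums to a genuinely smooth transformation $\Phi$ on a common neighborhood $\cW$ of the origin, rather than being solvable merely order by order.
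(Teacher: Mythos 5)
First, a structural point: the paper does not prove this theorem at all --- it is imported by citation from Fujimoto and Scherpen (2010), and the paper's own contributions (\cref{thm:input-normal-output-diagonal-transformation,thm:existence}) concern only the \emph{polynomial, degree-by-degree} analogue of it. So there is no in-paper proof to match your attempt against; the relevant comparison is with the cited literature and with the paper's polynomial machinery. Your first stage (Morse lemma to put $\cE_c$ exactly in the form $\tfrac12\bz^\top\bz$, then restricting to norm-preserving changes of coordinates) is indeed how the input-normal part is handled in Scherpen's original work, and your second stage mirrors the degree-by-degree elimination that the paper carries out in \cref{sec:main-result}. But the actual proof of the cited theorem is not a power-series argument: Fujimoto and Scherpen obtain the axis singular value functions geometrically, from the differential eigenstructure of the Hankel operator --- roughly, the critical points of $\cE_o$ restricted to level sets of $\cE_c$ form $n$ smooth curves through the origin when the Hankel singular values are distinct, and these curves are straightened into the coordinate axes. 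That route produces a genuinely smooth $\Phi$ on a neighborhood $\cW$ directly, with no series to sum.

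This matters because the two difficulties you flag at the end are not minor loose ends; they are the entire content of the theorem, and your sketch resolves neither. (a) Solvability of the constrained elimination at each degree is not a matter of ``Sylvester-type operators being nonsingular'': once you restrict to transformations preserving $\tfrac12\bz^\top\bz$, the system for each coefficient is \emph{underdetermined} and coupled (input-normal and output-diagonal conditions must be met simultaneously); for $n\geq 3$ solutions are non-unique, and existence requires the full-row-rank argument that the paper devotes all of \cref{thm:existence} to --- a combinatorial analysis of which monomial permutation classes get scaled by which $\sigma_i^2$, not an eigenvalue-separation argument. (b) Even granting order-by-order solvability, a formal power series solution does not yield a smooth transformation on a neighborhood of the origin: formal solvability is exactly what Krener's approach delivers, and it is strictly weaker than the smooth statement being proved. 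Since your proposal explicitly defers this convergence question rather than answering it (and the series need not converge without further work), the argument as sketched establishes only the formal/polynomial version --- essentially what the paper proves for itself in \cref{thm:input-normal-output-diagonal-transformation,thm:existence} --- and not the cited smooth theorem.
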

\begin{remark}
    The functions $\sigma_i(z_i)$ are known as the \emph{axis singular value functions} \cite{Fujimoto2010}; critically, they only depend on the $i$th component of the transformed state $z_i$.
    This is in distinction with the original \emph{smooth singular value functions} $\tau_i (\bz)$ defined in \cite{Scherpen1993} which could have full state dependence,
    resulting in non-uniqueness of the transformations and the resultant nonlinear ROMs.
\end{remark}

In general, solving the PDEs \cref{eq:HJ-Equation,eq:Nonlin-Lyap} analytically for the energy functions is not feasible.
In this work, we leverage the approximation technique of Al'brekht.
For the special case when the functions $\bf(\bx)$, $\bg(\bx)$, and $\bh(\bx)$ of the dynamics \cref{eq:FOM-NL} are analytic, the energy functions are also analytic \cite{Albrekht1961,Lukes1969}, in which case they can be
expressed in a convergent power series which we truncate to $d$th order as
\begin{align}
     & \cE_c(\bx)
    \approx \frac{1}{2} \sum_{i=2}^d \bv_i^\top \kronF{\bx}{i},
     &            &
     & \cE_o(\bx)
    \approx \frac{1}{2} \sum_{i=2}^d \bw_i^\top \kronF{\bx}{i}. \label{eq:vi-coeffs}
\end{align}
Algorithm 1 in \cite{Corbin2023} provides an efficient method for computing the coefficients $\bv_i$ and $\bw_i$, and an open-source \textsc{Matlab} implementation is available in the \texttt{cnick1/NLbalancing} repository \cite{NLBalancing2023}.
Henceforth, we assume that the energy functions are available in the form \cref{eq:vi-coeffs}.

\subsection{Kronecker product definitions and notation}\label{sec:notation}
The Kronecker product of two matrices $\bA \in \real^{p \times q}$ and $\bB \in \real^{s \times t}$ is the $ps \times qt$ block matrix
\begin{align*}
    \bA \otimes \bB \coloneqq \begin{bmatrix} a_{11}\bB & \cdots & a_{1q}\bB \\
                \vdots    & \ddots & \vdots    \\
                a_{p1}\bB & \cdots & a_{pq}\bB
                              \end{bmatrix},
\end{align*}
where $a_{ij}$ denotes the $(i,j)$th entry of $\bA$.
We write $k$-times repeated Kronecker products as
$\kronF{\bx}{k} \coloneqq \bx \otimes \dots \otimes \bx \in \real^{n^k}$.
For a variable $\bx \in \real^n$, the quantity $\kronF{\bx}{k}$ is a vector whose entries are all monomials of degree $k$, and
for a coefficient vector $\bw_k \in \real^{n^k}$, the quantity $\bw_k^\top \kronF{\bx}{k}$ is a scalar homogeneous polynomial of degree $k$.

\begin{definition}[Diagonal monomial entries]\label{def:diagonal}
    Consider the set of univariate monomial entries of $\kronF{\bx}{k}$, i.e. the entries $\{x_1^k, x_2^k, \dots, x_n^k\}$.
    We refer to these as \underline{\emph{diagonal}} entries.
\end{definition}
\begin{definition}[Off-diagonal monomial entries]\label{def:off-diagonal}
    We refer to the remaining multivariate monomial entries of $\kronF{\bx}{k}$ as \underline{\emph{off-diagonal}} entries.
    These are entries of the form
    $x_{i_1}^{e_1} x_{i_2}^{e_2} \dots x_{i_n}^{e_n}$, where the indices $\left\{i_1, \dots, i_n\right\} \in \left\{1,\dots,n\right\}$ contain at least two distinct values and the exponents $\left\{e_1, \dots, e_n\right\} \in \left\{0,\dots,k\right\}$ sum to $k$.
\end{definition}

We introduce \cref{fig:diag-tensors} to illustrate the motivation for distinguishing between diagonal and off-diagonal entries.
If the vector $\kronF{\bx}{k} \in \real^{n^k}$ is reshaped as an $n\times n \times \dots \times n$ tensor of dimension $k$,
the entries along the diagonal of the tensor are the univariate monomials, hence why we refer to them as diagonal entries.
\begin{figure}[htb]
    \centering
    \begin{subfigure}[h]{0.45\columnwidth}
        \centering
        \includegraphics[width=\textwidth,page=2]{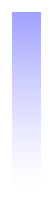}
        \caption{Unstructured tensor.}
        \label{sfig:label}
    \end{subfigure}
    \hfill
    \begin{subfigure}[h]{0.45\columnwidth}
        \centering
        \includegraphics[width=\textwidth,page=4]{example17_cubeTensorVisualizations.pdf}
        \caption{Diagonal tensor.}
    \end{subfigure}
    \caption{Visualization of non-diagonal and diagonal tensors in three dimensions.}
    \label{fig:diag-tensors}
\end{figure}

\noindent Similarly, for a scalar homogeneous polynomial $\bw_k^\top \kronF{\bx}{k}$, the coefficient vector $\bw_k \in \real^{n^k}$ can be reshaped in the same way as an $n\times n \times \dots \times n$ tensor of dimension $k$.
If the only nonzero entries in $\bw_k$ are the diagonal entries, then coefficient $\bw_k$ can be said to have diagonal structure (analogous to a diagonal matrix).
Consequently, the scalar homogeneous polynomial $\bw_k^\top \kronF{\bx}{k}$ can be thought of as ``diagonal'', leading to the following definition formalizing this concept for general polynomials.
\begin{definition}[Diagonal polynomial]\label{def:diagonal-polynomial}
    Let $p : \real^n \to \real$ be a scalar polynomial of degree $d$, which we write as $p(\bx) = \sum_{i=1}^d \bw_i^\top \kronF{\bx}{i}$.
    The polynomial $p(\bx)$ is said to be \underline{\emph{diagonal}} if all of its coefficients $\bw_i \in \real^{n^i}$ have diagonal structure.
    In other words, a polynomial is said to be diagonal if it is a sum of univariate monomials.
\end{definition}

Definitions \labelcref{def:diagonal}, \labelcref{def:off-diagonal}, and \labelcref{def:diagonal-polynomial} play a central role in the present work: to ensure input-normal/output-diagonal structure, we will impose diagonal structure on the polynomial observability energy function according to \cref{def:diagonal-polynomial}.

Notice that due to the definition of the Kronecker product, off-diagonal monomials appear multiple times in the vector $\kronF{\bx}{k}$.
For example, $\kronF{\bx}{2}$ contains both $x_1 x_2$ and $x_2 x_1$.
The next definition is introduced for scenarios in which we wish to eliminate these duplicate entries.
\begin{definition}[Minimal monomial representation]\label{def:min-monomial}
    Given $\kronF{\bx}{k} \in \real^{n^k}$, consider a vector of reduced dimension ${n+k-1}\choose{k}$ containing only the unique monomial entries.
    We call this the \emph{minimal monomial representation} and write it as $\kronFMin{\bx}{k}$.
\end{definition}

It is sometimes convenient to write the homogeneous polynomial $\bw_k^\top \kronF{\bx}{k}$ using the minimal monomial representation $\kronFMin{\bx}{k}$ instead;
to that end, we introduce the matrix $\bN_k$, a binary matrix with ${n+k-1}\choose{k}$ rows and $n^k$ columns satisfying
\begin{align}
    (\bw_k)^\top \kronF{\bx}{k} = (\bN_k\bw_k)^\top \kronFMin{\bx}{k}. \label{eq:Nhat}
\end{align}
For convenience, we order $\bN_k$ and $\kronFMin{\bx}{k}$ such that the first $n$ rows correspond to the diagonal monomials.
Subsequently, the matrix $\hat\bN_k$ of the remaining rows of $\bN_k$ corresponding to the off-diagonal entries can be defined as
\begin{align*}
    \hat\bN_k \coloneqq \bN_k(\texttt{n+1:end}).
\end{align*}

Two common operations that aid with tensor algebra are the $\Vec(\cdot)$ and $\rm{reshape}(\cdot)$ operations.
The $\Vec(\cdot)$ operation stacks the columns of a matrix into one tall column vector.
As defined in \cite{Golub2013}, for $p_1 q_1 = p_2 q_2$, the $\rm{reshape}(\cdot)$ operation reshapes a matrix $\bA \in \real^{p_1 \times q_1}$ into a new matrix $\tilde\bA = \rm{reshape}(\bA,p_2,q_2) \in \real^{p_2 \times q_2}$ such that $\Vec(\bA) = \Vec(\tilde\bA)$.

\begin{remark}
    The notations
    $\bN_k$
    and
    $\kronFMin{\bx}{k}$
    are reminiscent of
    the ``duplication matrix''
    and
    the $\Vech(\cdot)$ operation
    \cite{Henderson1979}.
    The ``duplication matrix'' $\bD \in \real^{n(n+1)/2 \times n^2}$, defined to satisfy $\bD \Vech(\bM) = \Vec(\bM)$ for a symmetric matrix $\bM\in \real^{n \times n}$, is a special case of $\bN_k^\top$ for $k=2$.
    If the $\Vech(\cdot)$ operation is generalized from symmetric matrices to symmetric tensors and $\kronF{\bx}{k}$ is thought of as an $n\times n \times \dots \times n$ tensor of dimension $k$ rather than an $n^k \times 1$ vector, $\kronFMin{\bx}{k}$ can be regarded as $\Vech(\kronF{\bx}{k})$.
\end{remark}
For a set of $\bT_i \in \real^{n \times n^i}$, we adopt the notation
\begin{equation} \label{eq:tensorproducts}
    \cT_{m,l} \coloneqq \sum_{\sum{i_j}=l} \bT_{i_1}\otimes \dots \otimes \bT_{i_m} \in \real^{n^m\times n^l}
\end{equation}
to compactly write the sum of all unique tensor products with $m$ terms and $n^l$ columns, as introduced in \cite{Kramer2023}.
\begin{lemma}\label{prop:poly-transformation}
    Let $\bp : \real^n \to \real^n$ be a vector-valued polynomial function of degree $d$, which we write as
    \begin{align*}
        \bp(\bx) & =  \sum_{i=1}^d \bP_i \kronF{\bx}{i}, \quad \text{where } \quad \bP_i \in \real^{n \times n^i}.
    \end{align*}
    Additionally, consider a degree $k$ polynomial coordinate transformation $\bPhi: \real^n \to \real^n$ given by
        {\begin{align*}
                \bx  = \Phi(\bz) & = \sum_{i=1}^k \bT_{i} \kronF{\bz}{i},
                \quad \text{where } \quad \bT_i \in \real^{n \times n^i}.
            \end{align*}}
    In the new $\bz$ coordinates, $\bp(\Phi(\bz))$ can be written to degree $d$ as
    \begin{align*}
        \bp(\Phi(\bz)) & =  \sum_{i=1}^d \tilde\bP_i \kronF{\bz}{i}
        , \quad \text{where } \quad \tilde\bP_i  =  \sum_{j=1}^i  \bP_j \cT_{j,i}.
    \end{align*}
\end{lemma}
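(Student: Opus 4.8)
The plan is to substitute the transformation $\bx = \Phi(\bz)$ directly into $\bp(\bx) = \sum_{j=1}^d \bP_j \kronF{\bx}{j}$ and then collect the resulting terms by degree in $\bz$, truncating to degree $d$. The entire content of the lemma lies in understanding how the repeated Kronecker power $\kronF{\Phi(\bz)}{j}$ expands, so I would isolate that computation first and feed the result back into the sum defining $\bp$.

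To expand $\kronF{\Phi(\bz)}{j}$, I would write $\Phi(\bz) = \sum_{i=1}^k \bT_i \kronF{\bz}{i}$ in each of the $j$ Kronecker factors and distribute the Kronecker product over the sums, producing a sum over all index tuples $(i_1,\dots,i_j)$ of the terms $\bigl(\bT_{i_1}\kronF{\bz}{i_1}\bigr)\otimes\dots\otimes\bigl(\bT_{i_j}\kronF{\bz}{i_j}\bigr)$. The key step is then to apply the mixed-product property $(\bA\bC)\otimes(\bB\bD) = (\bA\otimes\bB)(\bC\otimes\bD)$ iteratively across the $j$ factors, which separates the coefficient matrices from the monomial vectors and gives $\bigl(\bT_{i_1}\otimes\dots\otimes\bT_{i_j}\bigr)\bigl(\kronF{\bz}{i_1}\otimes\dots\otimes\kronF{\bz}{i_j}\bigr)$. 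Using the elementary identity $\kronF{\bz}{i_1}\otimes\dots\otimes\kronF{\bz}{i_j} = \kronF{\bz}{l}$ with $l = i_1+\dots+i_j$, and then grouping the tuples by their common sum $l$, recovers exactly the definition \cref{eq:tensorproducts} of $\cT_{j,l}$. This yields $\kronF{\Phi(\bz)}{j} = \sum_{l\geq j} \cT_{j,l}\kronF{\bz}{l}$, where the lower bound $l \geq j$ comes from each $i_\ell \geq 1$.

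Finally I would substitute this expression back into $\bp(\Phi(\bz)) = \sum_{j=1}^d \bP_j \kronF{\Phi(\bz)}{j}$, interchange the finite sums, and read off the coefficient of each $\kronF{\bz}{i}$. Since $\cT_{j,l}$ contributes only for $l \geq j$, the degree-$i$ term receives contributions solely from indices $j \leq i$, giving $\tilde\bP_i = \sum_{j=1}^i \bP_j \cT_{j,i}$ once terms of degree exceeding $d$ are discarded. I do not expect any deep difficulty here; the main obstacle is purely the index bookkeeping, namely applying the mixed-product property consistently across all $j$ factors at once and verifying that the regrouping of the tuples $(i_1,\dots,i_j)$ by their sum reproduces the $\cT_{j,l}$ sums precisely, together with confirming that the dimensions $\bP_j \cT_{j,i} \in \real^{n\times n^i}$ are compatible so that the collected coefficients $\tilde\bP_i$ indeed live in $\real^{n\times n^i}$.
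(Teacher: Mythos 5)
Your proposal is correct and follows essentially the same route as the paper: substitute $\bx = \Phi(\bz)$ into $\bp$, distribute the Kronecker powers, and collect terms of each degree to identify $\tilde\bP_i = \sum_{j=1}^i \bP_j \cT_{j,i}$. The paper's proof simply writes out the first few terms of the expansion and states that collecting degree-$i$ terms gives the result, whereas you make explicit the mixed-product property and the regrouping of index tuples by their sum that justify this collection step.
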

\begin{proof}
    Substituting $\bx  = \Phi(\bz)$ into $\bp(\bx)$ yields
    \begin{align*}
         & \bp(\Phi(\bz))  =  \sum_{i=1}^d \bP_i \kronF{(\Phi(\bz))}{i}                                             \\
         & =  \bP_1 (\Phi(\bz)) +  \bP_2 \kronF{(\Phi(\bz))}{2}
        + \dots                                                                                                     \\
         & =  \bP_1 (\bT_1 \bz + \bT_2 \kronF{\bz}{2} + \dots + \bT_{k} \kronF{\bz}{k})                             \\
         & \quad +  \bP_2 \left((\bT_1 \bz + \bT_2 \kronF{\bz}{2} + \dots + \bT_{k} \kronF{\bz}{k}) \otimes \right. \\
         & \hspace{1.5cm}\left.(\bT_1 \bz + \bT_2 \kronF{\bz}{2} + \dots + \bT_{k} \kronF{\bz}{k})\right)
        + \dots
    \end{align*}
    Collecting the terms of degree $i$, one can see that the coefficient of $\kronF{\bz}{i}$ for $1 \leq i \leq d$ is $\tilde\bP_i  =  \sum_{j=1}^i  \bP_j \cT_{j,i}$.
\end{proof}
\begin{remark}
    A degree $d$ function in the original $\bx$ coordinates only remains exactly of degree $d$ in the transformed $\bz$ coordinates if the transformation is linear, i.e. of degree $k=1$.
    If the transformation is of degree $k \geq 2$, the degree $d$ function in the original $\bx$ coordinates becomes degree $k\cdot d$ in the transformed $\bz$ coordinates.
    In practice, the degree $d$ of the original function is often chosen as a reflection of the available computational resources, since the computational burden increases with $d$.
    Therefore in this work, we truncate the transformed function also to degree $d$ as an approximation.
\end{remark}

\section{Main result: input-normal/output-diagonal transformation}\label{sec:main-result}
Given the two energy functions \cref{eq:vi-coeffs} in the original $\bx$ coordinates,
we seek to compute a polynomial transformation
\begin{align}\label{eq:transformation}
    \bx  = \Phi(\bz) & = \bT_1 \bz + \bT_2 \kronF{\bz}{2} + \dots + \bT_{d-1} \kronF{\bz}{d-1}
\end{align}
with $\bT_i \in \real^{n \times n^i}$
such that the transformed energy functions are input-normal and output-diagonal
to degree $d$, as in \cref{eq:in-od-definition}.
The next theorem provides this main result.
\begin{theorem}\label{thm:input-normal-output-diagonal-transformation}
    Let $\bv_i,\bw_i \in \real^{n^i}$ for $i=2,3,\dots,d$ be the polynomial coefficients for the controllability and observability energy functions $\cE_c(\bx)$ and $\cE_o(\bx)$ which solve the Hamilton-Jacobi equation \cref{eq:HJ-Equation} and the nonlinear Lyapunov equation \cref{eq:Nonlin-Lyap} for the control-affine system \cref{eq:FOM-NL}.
    Let $\bV_2 = \bR \bR^\top$ and $\bW_2 = \bL \bL^\top$, where $\bR$ and $\bL$ are square-root factors of $\bV_2$ and $\bW_2$.
    Compute
    $\bL^\top \bR^{-\top} =\cU \bSigma \cV^\top$
    using the singular value decomposition (SVD).
    Define the linear transformation coefficient $\bT_1 \coloneqq \bR^{-\top} \cV$, and let
    $\bT_{k-1}$ for $k \geq 3$ satisfy both the \emph{input-normal equations}
    \begin{equation}\label{eq:input-normal-0}
        \begin{split}
             & 2  \bN_k \left(\bI_{n^{k-1}}  \otimes \bT_1^\top\bV_2\right)\Vec \left( \bT_{k-1} \right) \\
             & = - \bN_k \left(\sum_{\substack{i,j\geq 2                                                 \\ i+j=k}}  \Vec \left( \bT_{j}^\top \bV_2 \bT_{i} \right) +  \sum_{i=3}^k  \cT_{i,k}^\top \bv_i\right)  ,
        \end{split}
    \end{equation}
    and the \emph{output-diagonal equations}
    \begin{equation}\label{eq:output-diagonal-0}
        \begin{split}
             & 2  \hat\bN_k \left(\bI_{n^{k-1}}  \otimes \bT_1^\top\bW_2\right)\Vec \left( \bT_{k-1} \right) \\
             & = - \hat\bN_k \left(\sum_{\substack{i,j\geq 2                                                 \\ i+j=k}}  \Vec \left( \bT_{j}^\top \bW_2 \bT_{i} \right) +  \sum_{i=3}^k  \cT_{i,k}^\top \bw_i\right)  .
        \end{split}
    \end{equation}
    Then input-normal/output-diagonal polynomial transformation to degree $d$ is given by \cref{eq:transformation}.
\end{theorem}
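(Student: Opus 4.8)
The plan is to verify that the two displayed systems are exactly the conditions forcing the degree-by-degree expansions of $\cE_c(\Phi(\bz))$ and $\cE_o(\Phi(\bz))$ to match the input-normal/output-diagonal targets in \cref{eq:in-od-definition}, and then to conclude by induction on the polynomial degree. The first step is to specialize the composition computation of \cref{prop:poly-transformation} to the scalar energy functions. Writing $\cE_c(\bx) = \frac{1}{2}\sum_{i=2}^d \bv_i^\top \kronF{\bx}{i}$ and substituting $\bx = \Phi(\bz)$, the same multilinear expansion used in the proof of \cref{prop:poly-transformation} gives $\kronF{(\Phi(\bz))}{i} = \sum_l \cT_{i,l}\kronF{\bz}{l}$, so that the coefficient of $\kronF{\bz}{k}$ in $\cE_c(\Phi(\bz))$ is $\frac{1}{2}\sum_{i=2}^k \cT_{i,k}^\top \bv_i$, and likewise $\frac{1}{2}\sum_{i=2}^k \cT_{i,k}^\top \bw_i$ for $\cE_o$. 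In this language, \emph{input-normal} means this degree-$k$ coefficient must represent the zero polynomial for every $k \geq 3$ (and the identity $\bI_n$ at $k=2$), while \emph{output-diagonal} means the degree-$k$ coefficient must represent a diagonal polynomial (\cref{def:diagonal-polynomial}) for every $k \geq 2$.

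For the base case $k=2$, I would show that the choice $\bT_1 = \bR^{-\top}\cV$ sets the quadratic coefficient matrices to $\bT_1^\top \bV_2 \bT_1 = \bI_n$ and $\bT_1^\top \bW_2 \bT_1 = \bSigma^2$. Using $\bV_2 = \bR\bR^\top$ and orthogonality of $\cV$ gives the former immediately, while substituting the SVD $\bL^\top \bR^{-\top} = \cU\bSigma\cV^\top$ into $\bT_1^\top \bW_2 \bT_1 = \cV^\top \bR^{-1}\bL\bL^\top \bR^{-\top}\cV$ collapses it to $\bSigma^2$. This is the classical linear balancing construction and establishes input-normal/output-diagonal structure at the quadratic level.

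For the inductive step $k \geq 3$, I would impose that the degree-$k$ coefficient of $\cE_c(\Phi(\bz))$ define the zero polynomial, i.e. $\bN_k \sum_{i=2}^k \cT_{i,k}^\top \bv_i = \bzero$, since two coefficient vectors define the same polynomial precisely when they agree after $\bN_k$. Expanding $\cT_{2,k}^\top \bv_2 = \sum_{a+b=k}\Vec(\bT_b^\top \bV_2 \bT_a)$, I isolate the only two summands containing the unknown $\bT_{k-1}$, namely $(a,b)=(1,k-1)$ and $(a,b)=(k-1,1)$. These correspond to the scalar polynomials $\bz^\top \bT_1^\top \bV_2 \bT_{k-1}\kronF{\bz}{k-1}$ and its transpose, which are identical, so after applying $\bN_k$ they combine into $2\bN_k \Vec(\bT_1^\top \bV_2 \bT_{k-1}) = 2\bN_k(\bI_{n^{k-1}}\otimes \bT_1^\top \bV_2)\Vec(\bT_{k-1})$ via $\Vec(\bA\bX\bB)=(\bB^\top\otimes\bA)\Vec(\bX)$. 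Moving the remaining $a,b\geq 2$ products and the $i\geq 3$ terms to the right-hand side yields exactly \cref{eq:input-normal-0}. The output-diagonal requirement at degree $k$ is weaker: only the off-diagonal coefficients must vanish, which is the condition $\hat\bN_k \sum_{i=2}^k \cT_{i,k}^\top \bw_i = \bzero$; repeating the identical splitting with $\bW_2$, $\bw_i$, and $\hat\bN_k$ in place of $\bN_k$ produces \cref{eq:output-diagonal-0}. Since we approximate to degree $d$ (per the remark following \cref{prop:poly-transformation}), only the systems for $k=3,\dots,d$ are enforced, which is exactly enough to determine $\bT_2,\dots,\bT_{d-1}$.

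The main obstacle is the bookkeeping in the inductive step: I must confirm that $\bT_{k-1}$ enters \emph{only linearly} and \emph{only through} these two transpose-related terms, that all other contributions ($a,b\geq 2$ products and $\cT_{i,k}^\top$ with $i\geq 3$) depend solely on the already-fixed $\bT_1,\dots,\bT_{k-2}$, and that the symmetry $\bN_k \Vec(\bT_{k-1}^\top \bV_2 \bT_1) = \bN_k \Vec(\bT_1^\top \bV_2 \bT_{k-1})$ (whence the factor of $2$) follows from the symmetry of $\bV_2$ together with the polynomial-equivalence characterization of $\bN_k$. One must also verify that passing from $\bN_k$ to $\hat\bN_k$ correctly encodes the transition from ``zero polynomial'' to ``diagonal polynomial,'' using the row ordering of $\bN_k$ in which the first $n$ rows are the diagonal monomials. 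I emphasize that this theorem only asserts that the equations \emph{characterize} the input-normal/output-diagonal structure; their \emph{solvability} for $\bT_{k-1}$ is a separate matter deferred to \cref{sec:secondary-results}. Granting the hypothesis that each $\bT_{k-1}$ solves both systems, the degree-$k$ parts of the transformed energies are input-normal and output-diagonal for every $k$, so by induction the transformation \cref{eq:transformation} realizes the form \cref{eq:in-od-definition} to degree $d$.
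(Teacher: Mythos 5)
Your proposal is correct and follows essentially the same route as the paper's proof: impose \cref{eq:input-normal-1,eq:output-diagonal-1} degree-by-degree via \cref{prop:poly-transformation}, solve the $k=2$ case by square-root linear balancing, convert ``zero polynomial'' and ``diagonal polynomial'' into $\bN_k$- and $\hat\bN_k$-conditions, and isolate $\bT_{k-1}$ from $\cT_{2,k}^\top \bv_2$ with the Kronecker-vec identity. Your explicit justification of the factor of $2$ (combining the $(1,k-1)$ and $(k-1,1)$ cross terms after $\bN_k$) and your remark that solvability is deferred to \cref{thm:existence} match the paper's treatment, merely stated with a bit more bookkeeping.
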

\begin{remark}
    The factors $\bR^{-1}$ and $\bL$ in \cref{thm:input-normal-output-diagonal-transformation} can be computed directly using the Hammarling algorithm \cite{Hammarling1982}, i.e. \texttt{lyapchol()} in \textsc{Matlab}.
\end{remark}
\begin{proof}
    We solve for the transformation coefficients by inserting the transformation \cref{eq:transformation} into the controllability and observability energy functions and imposing the input-normal and output-diagonal structures from \cref{thm:inputnormaloutputdiagonal}.
    The transformed energy functions are
    \begin{align*}
         & \hspace{-.5cm} \cE_c(\Phi(\bz))  = \frac{1}{2}\sum_{k=2}^d \tilde\bv_k^\top \kronF{\bz}{k} ,\quad
         & \cE_o(\Phi(\bz))  = \frac{1}{2}\sum_{k=2}^d \tilde\bw_k^\top \kronF{\bz}{k},
    \end{align*}
    where $\tilde\bv_k = \sum_{i=2}^k  \cT_{i,k}^\top \bv_i$ and $\tilde\bw_k = \sum_{i=2}^k  \cT_{i,k}^\top \bw_i$ according to \cref{prop:poly-transformation}.
    The input-normal/output-diagonal structure defined in \cref{eq:in-od-definition} requires that these satisfy the conditions
    \begin{align}
        \sum_{k=2}^d \tilde\bv_k^\top \kronF{\bz}{k} & \exeq \bz^\top \bz,
        \label{eq:input-normal-1}                                                                  \\
        \sum_{k=2}^d \tilde\bw_k^\top \kronF{\bz}{k} & \exeq \sum_{i=1}^n   z_i^2 \sigma^2_i(z_i).
        \label{eq:output-diagonal-1}
    \end{align}
    Assuming that the squared singular value functions can be expanded as polynomials with $\sigma^2_i(z_i)^{\{i\}}$ denoting the degree~$i$ coefficient
    \begin{align*}
        \sigma^2_i(z_i) = \sigma^2_i(0) + \sigma^2_i(z_i)^{\{1\}} z_i  + \sigma^2_i(z_i)^{\{2\}} z_i^2   + \dots,
    \end{align*}
    we proceed to solve for the transformation coefficients $\bT_i$ such that they satisfy \emph{both}\footnote{In \cite{Kramer2023}, only the input-normal equations \cref{eq:input-normal-1} are satisfied. Thus the realization computed therein corresponds to the original \emph{smooth singular value function} formulation rather than the more recent \emph{axis singular value function} formulation. }
    \cref{eq:input-normal-1,eq:output-diagonal-1}.

    Collecting the degree $2$ terms from \cref{eq:input-normal-1,eq:output-diagonal-1},
    the transformed quadratic coefficients $\tilde\bv_2=\bv_2^\top (\bT_1 \otimes \bT_1 )$ and $\tilde\bw_2=\bw_2^\top (\bT_1 \otimes \bT_1 )$
    give
    the following set of conditions on the transformation coefficient $\bT_1$:
    \begin{align*}
        \bv_2^\top (\bT_1 \otimes \bT_1 )\kronF{\bz}{2} & = \bz^\top\bT_1^\top \bV_2 \bT_1 \bz &  & \exeq \bz^\top \bz,           \\
        \bw_2^\top (\bT_1 \otimes \bT_1 )\kronF{\bz}{2} & = \bz^\top\bT_1^\top \bW_2 \bT_1 \bz &  & \exeq \bz^\top \bSigma^2 \bz.
    \end{align*}
    Thus the solution for $\bT_1$ corresponds exactly to the input-normal/output-diagonal transformation used for linear systems.
    Here, we adopt the square-root balancing approach of \citet{Tombs1987}:
    let $\bV_2 = \bR \bR^\top$ and $\bW_2 = \bL \bL^\top$, where $\bR$ and $\bL$ are square-root factors of $\bV_2$ and $\bW_2$.
    Using the SVD to compute $\bL^\top \bR^{-\top} =\cU \bSigma \cV^\top$, it is well known that an input-normal/output-diagonal transformation for the Jacobian linearization of the system is given by
    \begin{align*}
        \bT_1 & = \bR^{-\top} \cV & \text{and} &  & \bT_1^{-1} & = \bSigma^{-1} \cU^\top \bL^\top= \cV^\top \bR^{\top}.
    \end{align*}

    For the remaining coefficients, the collection of degree $k$ terms from \cref{eq:input-normal-1,eq:output-diagonal-1} gives the input-normal/output-diagonal equations for $k \geq 3$ as
    \begin{align}
        0                                             & = \left( \sum_{i=2}^k \cT_{i,k}^\top \bv_i  \right)^\top \kronF{\bz}{k}, \label{eq:input-normal-4}   \\
        \sum_{i=1}^n z_i^2 \sigma^2_i(z_i)^{\{ k-2\}} & = \left( \sum_{i=2}^k \cT_{i,k}^\top \bw_i \right)^\top \kronF{\bz}{k}. \label{eq:output-diagonal-4}
    \end{align}
    Note that the unknown coefficient $\bT_{k-1}$ is contained in $\cT_{i,k}$ in each equation; for now, we will leave it embedded while we manipulate terms to save space.

    It is crucial here to recognize that the Kronecker product representation of the polynomials on the right-hand-sides of \cref{eq:input-normal-4,eq:output-diagonal-4} are not unique due to repetition of off-diagonal entries in $\kronF{\bz}{k}$.
    Using the minimal monomial representation $\kronFMin{\bz}{k}$ and the matrix $\bN_k$ introduced in \cref{sec:notation} to combine these repeated entries,
    the input-normal/output-diagonal equations \cref{eq:input-normal-4,eq:output-diagonal-4} can be rewritten
    \begin{align}
        0 & = \left(  \bN_k\sum_{i=2}^k \cT_{i,k}^\top \bv_i  \right)^\top \kronFMin{\bz}{k}, \label{eq:input-normal-5}   \\
        \sum_{i=1}^n z_i^2 \sigma^2_i(z_i)^{\{ k-2\}}
          & = \left( \bN_k\sum_{i=2}^k \cT_{i,k}^\top \bw_i \right)^\top \kronFMin{\bz}{k}. \label{eq:output-diagonal-5a}
    \end{align}
    If we only consider the off-diagonal entries by using $\hat\bN_k$ instead of $\bN_k$, the output-diagonal equation \cref{eq:output-diagonal-5a} reduces to
    \begin{align}
        0 & = \left( \hat\bN_k\sum_{i=2}^k \cT_{i,k}^\top \bw_i \right)^\top \kronFMin{\bz}{k}. \label{eq:output-diagonal-5b}
    \end{align}

    Since the minimal monomial representation and the matrix $\bN_k$ defined in \cref{eq:Nhat} eliminates the repeated entries, a necessary \emph{and} sufficient condition\footnote{In \cite{Kramer2023}, the coefficient vector of \cref{eq:input-normal-4} (without $\bN_k$) is taken to be zero. This leads to a fully determined linear system (which therefore has a unique solution) for each transformation coefficient. But due to the repeated entries in $\kronF{\bz}{k}$, the unique solution to those linear systems is not the only transformation satisfying \cref{eq:input-normal-4}; furthermore, that solution does not generally satisfy \cref{eq:output-diagonal-4}.} for satisfying \cref{eq:input-normal-5,eq:output-diagonal-5b}
    is for the coefficient vectors to be zero, leading to the equations
    \begin{align}
        \bzero & =  \bN_k\sum_{i=2}^k \cT_{i,k}^\top \bv_i    ,
               &
        \bzero & = \hat\bN_k\sum_{i=2}^k \cT_{i,k}^\top \bw_i . \label{eq:input-normal-output-diagonal-5}
    \end{align}

    To solve these equations for the unknown coefficient $\bT_{k-1}$, we expand the first term in the sums on the
    right-hand sides of both equations using Kronecker product identities:
    \begin{align*}
        \cT_{2,k}^\top \bv_2 & = 2\Vec \left( \bT_{1}^\top \bV_2 \bT_{k-1} \right) + \sum_{\mathclap{\substack{i,j\geq 2 \\ i+j=k}}}  \Vec \left( \bT_{j}^\top \bV_2 \bT_{i} \right).
    \end{align*}
    Together with the analogous expansion for $\bw_2$, the input-normal/output-diagonal equations \cref{eq:input-normal-output-diagonal-5} are rearranged to give
    \begin{align}
         & \begin{split}
                & 2\bN_k\Vec \left( \bT_{1}^\top \bV_2 \bT_{k-1} \right) \\
                & =- \bN_k
               \left(\sum_{\substack{i,j\geq 2                           \\ i+j=k}}  \Vec \left( \bT_{j}^\top \bV_2 \bT_{i} \right) + \sum_{i=3}^k \cT_{i,k}^\top \bv_i\right) \\
           \end{split}, \label{eq:input-normal-6} \\
         & \begin{split}
                & 2\hat\bN_k\Vec \left( \bT_{1}^\top \bW_2 \bT_{k-1} \right) \\
                & =- \hat\bN_k
               \left(\sum_{\substack{i,j\geq 2                               \\ i+j=k}}  \Vec \left( \bT_{j}^\top \bW_2 \bT_{i} \right) + \sum_{i=3}^k \cT_{i,k}^\top \bw_i\right)
           \end{split}.\label{eq:output-diagonal-6}
    \end{align}
    Applying the identity
    $\Vec \left( \bA \bD \right) = \left(\bI  \otimes \bA\right)\Vec \left( \bD \right) $
    to the left-hand-sides isolates the unknown vector $\Vec[\bT_{k-1}]$ and puts the equations in the standard form for a linear system as in \cref{eq:input-normal-0,eq:output-diagonal-0}, completing the proof.

\end{proof}

\begin{remark}\label{rem:uniqueness}
    The unknown transformation coefficient $\bT_{k-1}$ contains $n^k$ unknowns.
    There are ${n+k-1}\choose{k}$ input-normal equations in \cref{eq:input-normal-0} and ${{n+k-1}\choose{k}} - n$ output-diagonal equations in \cref{eq:output-diagonal-0}; in total, these equations are less than or equal to the $n^k$ unknowns.
    Due to the redundant terms in the Kronecker product form, we can impose an additional
    $n^k - n{{n+k-2}\choose{k-1}}$ constraints to eliminate the redundant terms.
    However, for $n \geq 3$, the number of equations is still less than $n^k$, meaning that in general the transformation coefficients are not uniquely determined.
    This is consistent with \cite[Theorem 7]{Fujimoto2005}: for $n \geq 3$, the state-dependent eigenvalue problem described therein does not have a unique solution, hence the coordinate transformations to achieve an input-normal output-diagonal balanced realization are not unique.
    This is also consistent with the results of \citet{Fujimoto2006} and \citet{Krener2008}.
    While in theory any of the solutions will yield an admissible transformation, we discuss the particular solution that our algorithm selects in \cref{sec:flops}.
\end{remark}

The input-normal/output-diagonal equations \cref{eq:input-normal-0,eq:output-diagonal-0}
represent a \textbf{dense} underdetermined linear system for each unknown coefficient $\bT_{k-1}$ for $k \geq 3$.
The next corollary suggests a subtle implementation step to render these equations sparse to significantly reduce their computational burden.
\begin{corollary}\label{prop:sparse computation}
    After computing the linear transformation coefficient $\bT_1$, the linear transformation $\bT_1$ is applied to the energy function coefficients $\bv_i$ and $\bw_i$.
    In the transformed coordinates, the quadratic components of the energy functions are input-normal ($\tilde\bV_2 = \bI$) and output-diagonal ($\tilde\bW_2 = \bSigma^2$).
    The remaining energy function coefficients $\tilde\bv_i$ and $\tilde\bw_i$ for $3 \leq i \leq d$ can be computed according to \cref{prop:poly-transformation}.
    In these transformed coordinates, the input-normal and output-diagonal equations for the remaining transformation coefficients $\hat\bT_{k-1}$ for $k \geq 3$ are
    \begin{equation}\label{eq:input-normal-9}
        \begin{split}
             & 2  \bN_k \Vec \left( \hat\bT_{k-1} \right) \\
             & = - \bN_k \left(\sum_{\substack{i,j\geq 2  \\ i+j=k}}  \Vec \left( \hat\bT_{j}^\top \hat\bT_{i} \right) +  \sum_{i=3}^k  \hat\cT_{i,k}^\top \tilde\bv_i\right),
        \end{split}
    \end{equation}
    and
    \begin{equation}\label{eq:output-diagonal-9}
        \begin{split}
             & 2  \hat\bN_k \left(\bI_{n^{k-1}}  \otimes \bSigma^2\right)\Vec \left( \hat\bT_{k-1} \right) \\
             & = - \hat\bN_k \left(\sum_{\substack{i,j\geq 2                                               \\ i+j=k}}  \Vec \left( \hat\bT_{j}^\top \bSigma^2 \hat\bT_{i} \right) +  \sum_{i=3}^k  \hat\cT_{i,k}^\top \tilde\bw_i\right)  .
        \end{split}
    \end{equation}
    Upon computing the secondary transformation given by the coefficients $\hat\bT_i$, the coefficients of the full input-normal/output-diagonal transformation of degree $d$ are
    $\bT_i = \bT_1 \hat\bT_i$.
\end{corollary}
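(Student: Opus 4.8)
The plan is to exhibit the full transformation $\Phi$ as the composition of the linear balancing map $\bT_1$ with a secondary polynomial transformation whose linear part is the identity, and then to specialize \cref{thm:input-normal-output-diagonal-transformation} to the intermediate, already linearly balanced, coordinates. Concretely, I would write $\Phi(\bz) = \bT_1\!\left(\sum_{i=1}^{d-1}\hat\bT_i\kronF{\bz}{i}\right)$ with $\hat\bT_1 = \bI_n$, introducing the intermediate coordinate $\bxi = \sum_{i=1}^{d-1}\hat\bT_i\kronF{\bz}{i}$ related to the original state by $\bx = \bT_1\bxi$. By associativity and bilinearity of the Kronecker product, $\bx = \bT_1\sum_i\hat\bT_i\kronF{\bz}{i} = \sum_i(\bT_1\hat\bT_i)\kronF{\bz}{i}$, so the coefficients of $\Phi$ satisfy $\bT_i = \bT_1\hat\bT_i$; this is the claimed factorization, and the choice $\hat\bT_1 = \bI_n$ is exactly what makes it consistent with the degree-one term $\bT_1 = \bT_1\hat\bT_1$.

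Next I would push the linear map $\bx = \bT_1\bxi$ through the energy functions using \cref{prop:poly-transformation} with the linear transformation $\bT_1$ as the coordinate change, producing the transformed coefficients $\tilde\bv_i$ and $\tilde\bw_i$ in the $\bxi$ coordinates. For the quadratic coefficients this is precisely the degree-$2$ analysis already carried out in the proof of \cref{thm:input-normal-output-diagonal-transformation}, which gives $\tilde\bV_2 = \bT_1^\top\bV_2\bT_1 = \bI$ and $\tilde\bW_2 = \bT_1^\top\bW_2\bT_1 = \bSigma^2$; that is, the intermediate coordinates are linearly input-normal/output-diagonal.

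I would then re-run the degree-$k$ derivation of \cref{thm:input-normal-output-diagonal-transformation} in the $\bxi$ coordinates, now solving for $\hat\bT_{k-1}$. This amounts to substituting $\tilde\bV_2 = \bI$, $\tilde\bW_2 = \bSigma^2$, $\hat\bT_1 = \bI$, and the transformed coefficients $\tilde\bv_i,\tilde\bw_i$ into \cref{eq:input-normal-0,eq:output-diagonal-0}, and the simplifications follow term by term. On the left-hand sides, the factor $\bI_{n^{k-1}}\otimes\hat\bT_1^\top\tilde\bV_2$ collapses to $\bI_{n^k}$, yielding \cref{eq:input-normal-9}, while $\bI_{n^{k-1}}\otimes\hat\bT_1^\top\tilde\bW_2$ collapses to $\bI_{n^{k-1}}\otimes\bSigma^2$, yielding \cref{eq:output-diagonal-9}. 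On the right-hand sides each cross term $\bT_j^\top\tilde\bV_2\bT_i$ becomes $\hat\bT_j^\top\hat\bT_i$ and $\bT_j^\top\tilde\bW_2\bT_i$ becomes $\hat\bT_j^\top\bSigma^2\hat\bT_i$, with the operators $\cT_{i,k}$ replaced by $\hat\cT_{i,k}$ built from the $\hat\bT_i$. The leading term $\hat\cT_{2,k}^\top\tilde\bv_2 = 2\Vec(\hat\bT_{k-1}) + \sum_{i,j\geq 2,\,i+j=k}\Vec(\hat\bT_j^\top\hat\bT_i)$ confirms that $\hat\bT_{k-1}$ enters the degree-$k$ equation in exactly the same structural position that $\bT_{k-1}$ occupied in the original proof.

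The main obstacle I anticipate is bookkeeping rather than anything conceptual: I must verify that solving the composition degree-by-degree is equivalent to solving the one-shot equations of \cref{thm:input-normal-output-diagonal-transformation}, i.e. that the degree-$d$ truncation of the composition does not couple the equations for different $\hat\bT_{k-1}$. Because the secondary transformation has identity linear part, $\hat\bT_{k-1}$ enters only through the leading $\hat\cT_{2,k}$ term, so the coefficient matrix of each linear system is preserved and only the data change under the substitutions above. I would close by noting the payoff that motivates the corollary: $\bN_k$ is a sparse binary matrix and $\bI_{n^{k-1}}\otimes\bSigma^2$ is diagonal, so \cref{eq:input-normal-9,eq:output-diagonal-9} are sparse, in contrast to the dense coefficient matrices $\bI_{n^{k-1}}\otimes\bT_1^\top\bV_2$ and $\bI_{n^{k-1}}\otimes\bT_1^\top\bW_2$ appearing in \cref{eq:input-normal-0,eq:output-diagonal-0}.
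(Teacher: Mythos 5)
Your proof is correct and is precisely the verification the paper intends but omits: the paper states Corollary~\ref{prop:sparse computation} without proof, remarking only that ``it is straightforward to verify that both approaches are equivalent,'' and your argument---factoring $\Phi = \bT_1 \circ \hat\Phi$ with $\hat\bT_1 = \bI_n$, noting $\tilde\bV_2 = \bT_1^\top \bV_2 \bT_1 = \bI$ and $\tilde\bW_2 = \bT_1^\top \bW_2 \bT_1 = \bSigma^2$ from the square-root balancing step, and re-running the degree-$k$ derivation of \cref{thm:input-normal-output-diagonal-transformation} on the transformed coefficients $\tilde\bv_i, \tilde\bw_i$---is exactly that verification, carried out along the same lines as the paper's proof of the theorem it specializes. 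Your closing observation that $\hat\bT_{k-1}$ appears only through $\hat\cT_{2,k}$ (since any tensor product summing to $k$ that contains a factor $\hat\bT_{k-1}$ must have exactly two factors), so that the degree-by-degree triangular structure and the coefficient matrices of the linear systems are preserved under the substitution, correctly closes the one bookkeeping point the paper glosses over.
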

It is straightforward to verify that both approaches are equivalent.
However, the sparsity enabled by the two-step approach greatly accelerates the computations, so in practice \cref{prop:sparse computation} should always be used over \cref{thm:input-normal-output-diagonal-transformation}.

\section{Analysis of the solvability and scalability of the proposed approach}\label{sec:secondary-results}
In this section, we discuss details regarding the solution of the input-normal/output-diagonal equations presented in \cref{thm:input-normal-output-diagonal-transformation}.
We begin in \cref{sec:existence} by proving the existence of solutions to the transformation equations of \cref{thm:input-normal-output-diagonal-transformation}.
We then examine the computational complexity associated with forming and solving these equations in \cref{sec:flops}.

\subsection{Existence of solutions}\label{sec:existence}
We begin by demonstrating that the assumptions of \cref{thm:inputnormaloutputdiagonal} are sufficient to guarantee the existence of solutions to the polynomial input-normal/output-diagonal equations \cref{eq:input-normal-0,eq:output-diagonal-0}, or equivalently \cref{eq:input-normal-9,eq:output-diagonal-9}.
\begin{theorem}\label{thm:existence}
    Let the Jacobian linearization of the nonlinear system \cref{eq:FOM-NL} have distinct, nonzero Hankel singular values, as in \cref{thm:inputnormaloutputdiagonal}.
    Then a solution exists to the underdetermined linear algebraic equations \cref{eq:input-normal-9,eq:output-diagonal-9}.
\end{theorem}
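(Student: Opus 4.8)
The plan is to work directly with the sparse system of \cref{prop:sparse computation}, namely \cref{eq:input-normal-9,eq:output-diagonal-9}, and to prove the stronger statement that the combined linear operator acting on the unknown $\Vec(\hat\bT_{k-1})$ is \emph{surjective}; consistency for the particular right-hand side then follows for free. I would argue by induction on the degree $k \geq 3$: the coefficients $\hat\bT_2, \dots, \hat\bT_{k-2}$ are fixed by earlier stages, so the entire right-hand side is a known vector and the only unknown is $\hat\bT_{k-1}$. In these coordinates the quadratic data are $\tilde\bV_2 = \bI$ and $\tilde\bW_2 = \bSigma^2$, and $\hat\bT_1 = \bI$.

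First I would recast the two matrix equations as polynomial-coefficient-matching conditions. Writing the rows of $\hat\bT_{k-1}$ as degree-$(k-1)$ polynomials $p_a(\bz) \coloneqq (\hat\bT_{k-1}\kronF{\bz}{k-1})_a$ for $a = 1,\dots,n$, \cref{prop:poly-transformation} shows that $\hat\bT_{k-1}$ enters the degree-$k$ coefficients only through $\hat\cT_{2,k}$ (never through $\hat\cT_{i,k}$ with $i\geq3$), contributing the polynomial $\sum_{a} z_a\, p_a(\bz)$ to the transformed controllability function and $\sum_{a} \sigma_a^2\, z_a\, p_a(\bz)$ to the transformed observability function. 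Hence \cref{eq:input-normal-9,eq:output-diagonal-9} are equivalent to choosing $p_1,\dots,p_n$ so that $\sum_a z_a p_a(\bz)$ matches a prescribed degree-$k$ polynomial on \emph{every} monomial, while $\sum_a \sigma_a^2 z_a p_a(\bz)$ matches a prescribed polynomial on the \emph{off-diagonal} monomials only (there is no constraint on the diagonal observability coefficients, which are free and define the axis singular value functions).

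The key structural observation I would exploit is that these conditions decouple monomial-by-monomial. The coefficient of the monomial $\prod_i z_i^{\beta_i}$ in $p_a$ contributes only to the degree-$k$ monomial indexed by $\beta + e_a$ (with $e_a$ the $a$th unit multi-index), and conversely the equation for the degree-$k$ monomial indexed by $\alpha$ involves exactly the unknowns $\{[p_a]_{\alpha - e_a} : \alpha_a \geq 1\}$, which appear in no other monomial's equations. The linear system therefore splits into independent blocks indexed by the target monomial $\alpha$. A diagonal target $\alpha = k\,e_b$ carries only the input-normal identity and simply fixes the single coefficient $[p_b]_{(k-1)e_b}$, so it is trivially solvable. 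An off-diagonal target $\alpha$ --- which by \cref{def:off-diagonal} involves at least two distinct variables --- yields a $2 \times |S_\alpha|$ block with $S_\alpha \coloneqq \{a : \alpha_a \geq 1\}$, $|S_\alpha| \geq 2$, whose coefficient matrix has rows $(1,\dots,1)$ and $(\sigma_a^2)_{a\in S_\alpha}$.

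The heart of the proof, and the step that invokes the hypotheses of \cref{thm:inputnormaloutputdiagonal}, is showing every off-diagonal block has full row rank $2$: since $S_\alpha$ contains two indices $a \neq a'$, the corresponding $2\times 2$ minor equals $\sigma_{a'}^2 - \sigma_a^2$, which is nonzero precisely because the Hankel singular values are \emph{distinct}. Each block is thus consistent for an arbitrary right-hand side, and assembling the blockwise solutions produces an $\hat\bT_{k-1}$ solving \cref{eq:input-normal-9,eq:output-diagonal-9}; the induction on $k$ then finishes the argument. I expect the main obstacle to be bookkeeping rather than mathematics: one must verify carefully that the $\bN_k$ and $\hat\bN_k$ formulation corresponds exactly to the monomial-matching conditions and that no unknown is shared across blocks. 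Once that is established, distinctness of the singular values delivers consistency immediately (distinctness alone suffices for the rank argument, while nonzeroness is what makes $\bSigma^2$, and hence the linear factor $\bT_1$, well defined).
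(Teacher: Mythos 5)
Your proof is correct, and it rests on the same key fact as the paper's: for each off-diagonal monomial, the input-normal row and the $\bSigma^2$-weighted output-diagonal row attached to that monomial are linearly independent precisely because the Hankel singular values are distinct, while rows attached to different monomials have disjoint supports; hence the full coefficient matrix is onto and the system is consistent for any right-hand side. The execution, though, is genuinely different. The paper stays in the matrix/Kronecker formalism: it stacks $2\bN_k$ over $2\hat\bN_k(\bI_{n^{k-1}}\otimes\bSigma^2)$, notes that the rows of $\bN_k$ are mutually orthogonal by construction, and rules out dependence of a row $\hat\br_i$ on its counterpart $\br_i$ by a contradiction argument tracking how the diagonal of $\bI_{n^{k-1}}\otimes\bSigma^2$ cycles in lockstep with the last factor of each permutation of the monomial. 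You instead pass to polynomial unknowns $p_a$, use the observation (implicit in the derivation of \cref{thm:input-normal-output-diagonal-transformation}, not re-proved in the paper's existence argument) that $\hat\bT_{k-1}$ enters the degree-$k$ conditions only through $\hat\cT_{2,k}$, and decouple the equations into independent blocks indexed by the target monomial, each of full row rank via the $2\times 2$ minor $\sigma_{a'}^2-\sigma_a^2\neq 0$. Your route buys a constructive blockwise solution, an explicit surjectivity statement, and a cleaner separation of hypotheses: distinctness alone drives the rank argument, whereas the paper additionally invokes nonzeroness so that its second block $\hat\bN_k(\bI_{n^{k-1}}\otimes\bSigma^2)$ has full row rank on its own. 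What your route costs is the word-versus-monomial bookkeeping you flag: \cref{eq:input-normal-9,eq:output-diagonal-9} are posed in the word-indexed entries of $\Vec[\hat\bT_{k-1}]$, so a blockwise (monomial-indexed) solution must be lifted back to word entries, e.g.\ by symmetrizing or by placing all weight on one representative word; this is indeed only bookkeeping, since distinct blocks touch disjoint sets of word entries, exactly the disjoint-support property the paper exploits.
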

\begin{proof}
    The coefficient matrix for the underdetermined system given by \cref{eq:input-normal-9,eq:output-diagonal-9}
    is the block matrix
    \begin{align*}
        \cA & \coloneqq \begin{bmatrix}
                            2\bN_k \\
                            2\hat\bN_k(\bI_{n^{k-1}} \otimes \bSigma^2)
                        \end{bmatrix}.
    \end{align*}
    A sufficient condition for the existence of a solution to an underdetermined linear system is that the coefficient matrix has full row-rank, in which case the right-hand-side vector of the linear system is trivially in the range $\cA$.

    Recall that the rows of $\bN_k$ and $\hat\bN_k$ are composed of 1s in the locations corresponding to equivalent monomials entries in $\kronF{\bz}{k}$ and 0s elsewhere.
    The first block of $\cA$, i.e. $2\bN_k$, has full row rank by construction then: each row corresponds to the equivalence class of a distinct monomial.
    A particular monomial can only belong to one equivalence class, so the rows are linearly independent and in fact orthogonal.
    The second block, $2\hat\bN_k(\bI_{n^{k-1}} \otimes \bSigma^2)$, similarly has full row rank by construction, since
    the rows of $\hat\bN_k$ are a subset of the rows of $\bN_k$.
    Since the Hankel singular values are assumed to be distinct and nonzero, the matrix $(\bI_{n^{k-1}} \otimes \bSigma^2)$ is a diagonal matrix with nonzero entries on the diagonal.
    Therefore, the rows of $\hat\bN_k(\bI_{n^{k-1}} \otimes \bSigma^2)$ are rows of $\bN_k$ scaled by values from $(\bI_{n^{k-1}} \otimes \bSigma^2)$.
    What remains then is to show that the rows of $\hat\bN_k(\bI_{n^{k-1}} \otimes \bSigma^2)$ are not linearly dependent on the rows of $\bN_k$.

    Let $\br_i$ be a row of $\bN_k$, and let $\hat\br_i$ be the row of $\hat\bN_k(\bI_{n^{k-1}} \otimes \bSigma^2)$ corresponding to the same monomial.
    Notice that $\hat\br_i = \br_i (\bI_{n^{k-1}} \otimes \bSigma^2)$, so each entry in $\hat\br_i$ is equal to the entry in $\br_i$ scaled by a particular squared Hankel singular value $\sigma_i^2$.
    Assume that $\hat\br_i$ is linearly dependent on $\br_i$; then each entry must be scaled by \emph{the same} $\sigma_i^2$, so we denote that squared Hankel singular value as $\sigma_\alpha^2$ and the index $\alpha \in [1,n]$.

    Looking more closely at the matrix $(\bI_{n^{k-1}} \otimes \bSigma^2)$,
    we see that
    its diagonal entries
    cyclically iterate through the squared Hankel singular values of indices $1$ through $n$:
    \begin{align*}
        (\bI_{n^{k-1}} \otimes \bSigma^2) & = \begin{bmatrix}
                                                  \sigma_1^2                                                \\
                                                   & \sigma_2^2                                             \\
                                                   &            & \ddots                                    \\
                                                   &            &        & \sigma_n^2                       \\
                                                   &            &        &            & \sigma_1^2          \\
                                                   &            &        &            &            & \ddots \\
                                              \end{bmatrix}.
    \end{align*}
    Similarly, the entries of the degree $k$ vector of monomials $\kronF{\bz}{k}$ contain at least one factor $z_j$ where $j$ cyclically iterates through the indices $1$ through $n$.
    This can be seen by denoting the $i$th entry of the vector $\kronF{\bz}{k-1}$ as $\zeta_{\scalebox{.5}{i}}$
    and writing the
    vector $\kronF{\bz}{k}$
    according to the definition of the Kronecker product
    as
    \begin{align*}
        \kronF{\bz}{k} = {\kronF{\bz}{k-1}} \otimes \bz = \begin{bmatrix}
                                                              \zeta_{\scalebox{.5}{1}} \, \bz            \\ 
                                                              \zeta_{\scalebox{.5}{2}} \, \bz            \\ 
                                                              \vdots                                     \\
                                                              \zeta_{\scalebox{.5}{\texttt{end}}} \, \bz \\ 
                                                          \end{bmatrix}, \quad \text{where }\bz = \begin{bmatrix}
                                                                                                      z_1 \\ z_2 \\\vdots \\ z_n
                                                                                                  \end{bmatrix}.
    \end{align*}
    So the index $j$ in the $m$th entry $\sigma_j^2$ along the diagonal $(\bI_{n^{k-1}} \otimes \bSigma^2)$ and the index $j$ of the \emph{last factor} $z_j$ of the $m$th monomial entry in $\kronF{\bz}{k}$ share a one-to-one correspondence.

    Now, recall that by its relation to $\bN_k$, the row $\br_i$ maps to an equivalence class of different permutations\footnote{For example, $x_1 x_2 x_3$, $x_1 x_3 x_2$, $x_2 x_1 x_3$, $x_2 x_3 x_1$, $x_3 x_1 x_2$, $x_3 x_2 x_1$.}
    of the same multivariate monomial of degree $k$ contained in the vector of monomials $\kronF{\bz}{k}$.
    Assume then that for a particular row $\br_i$,
    the corresponding row $\hat\br_i = \br_i (\bI_{n^{k-1}} \otimes \bSigma^2)$
    picks out all copies of the same squared Hankel singular value $\sigma_\alpha^2$, as required for linear dependence.
    Then due to the one-to-one correspondence between the squared Hankel singular values $\sigma_j^2$ in $(\bI_{n^{k-1}} \otimes \bSigma^2)$ and the last factor $z_j$ in the entries of $\kronF{\bz}{k}$, that particular row
    $\br_i$
    would also map to entries of $\kronF{\bz}{k}$ that all contain $z_\alpha$ as the last factor in the monomial.
    In other words, this implies that all permutations of the degree $k$ monomial in this equivalence class contain the same factor $z_\alpha$ in the last position.

    We have therefore reached a contradiction: the rows $\hat\br_i$ corresponding to rows of $\hat\bN_k$ map to all of the off-diagonal monomials.
    By definition, the off-diagonal monomials contain at least two distinct factors $z_\alpha$ and $z_\beta$ where $\alpha \neq \beta$.
    By permuting the factors $z_\beta$ and $z_\alpha$, a monomial with $z_\beta$ in the final position can be formed which still belongs to the same permutation equivalence class mapped to by the row $\hat\br_i$.
    Hence the index corresponding to this monomial permutation was not included in the row of $\hat\bN_k$, implying that $\hat\bN_k$ does not match its definition.

    We can therefore conclude that at least one entry in the row $\hat\br_i$ is scaled by a squared Hankel singular value $\sigma_\beta^2$ that is \emph{distinct} from $\sigma_\alpha^2$, implying that $\hat\br_i$ is \underline{not} linearly dependent on $\br_i$ and that the coefficient matrix $\cA$ has full row rank.
    Thus the linear system given by \cref{eq:input-normal-9,eq:output-diagonal-9} has a solution.
\end{proof}

\subsection{FLOP count}\label{sec:flops}
In this section, we investigate the cost of computing the polynomial input-normal/output-diagonal transformation of \cref{prop:sparse computation}.
\begin{proposition}
    Let the coefficients of the controllability and observability energy functions be given to degree $d$.
    The computational complexity of forming equations \cref{eq:input-normal-9,eq:output-diagonal-9} to compute a degree $k=d-1$ transformation to balance the energy functions to degree~$d$ is $O(dn^{d+1})$.
\end{proposition}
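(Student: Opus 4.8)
The plan is to account separately for the two kinds of work needed to \emph{form} the right-hand sides of \cref{eq:input-normal-9,eq:output-diagonal-9} at each degree $k$, to observe that applying the left-hand-side operators is negligible, and finally to identify the dominant degree. Throughout I would let $k$ range over $3 \leq k \leq d$, since the full degree-$(d-1)$ transformation requires forming the system for every such $k$ (the coefficient $\hat\bT_{k-1}$ is obtained from the degree-$k$ system), and I will argue that the cost is dominated by the top degree $k=d$.

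First I would dispatch the left-hand sides as cheap. By \cref{prop:sparse computation} the operators on the left are $\bN_k$ and $\hat\bN_k(\bI_{n^{k-1}}\otimes\bSigma^2)$; both are binary selection matrices (up to the diagonal scaling by $\bSigma^2$) with exactly one nonzero per column, so they need never be formed densely, and their action on an $n^k$-vector costs only $O(n^k)$. Hence the real work lies in assembling the two right-hand-side vectors, each of which is a sum of cross-product terms $\sum_{i+j=k}\Vec(\hat\bT_j^\top\hat\bT_i)$ (respectively with $\bSigma^2$ inserted) and tensor-product-sum terms $\sum_{i=3}^k \hat\cT_{i,k}^\top\tilde\bv_i$ (respectively $\tilde\bw_i$).

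Next I would bound each family. The cross-product terms are ordinary matrix products $\hat\bT_j^\top\hat\bT_i\in\real^{n^j\times n^i}$ whose contracted dimension is $n$, so each costs $n^{i+j+1}=n^{k+1}$ flops; there are $O(k)$ admissible pairs $(i,j)$, giving $O(k\,n^{k+1})$. For the tensor-product-sum terms I would invoke the efficient evaluation of the operators $\cT_{i,k}$ from \cref{eq:tensorproducts} developed in \cite{Kramer2023}: rather than forming the Kronecker products explicitly, each action $\hat\cT_{i,k}^\top\tilde\bv_i$ is computed by reshaping $\tilde\bv_i$ and applying the factors $\hat\bT_s^\top$ mode-by-mode through the recursion $\cT_{i,k}=\sum_s \cT_{i-1,k-s}\otimes\hat\bT_s$, reusing shared subexpressions. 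Every mode multiplication contracts a dimension of size $n$ and is bounded by $n^{k+1}$ flops, and the number of such multiplications needed to assemble the entire degree-$k$ sum grows only polynomially in $k$; the dominant contribution is again $O(k\,n^{k+1})$, and applying the selection matrices afterwards adds only $O(n^k)$.

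Finally I would sum over degrees: each degree $k$ costs $O(k\,n^{k+1})$, so the total is $\sum_{k=3}^{d}O(k\,n^{k+1})$, and since consecutive terms grow by a factor of $n\geq 2$ this geometric sum is dominated by its last term, yielding $O(d\,n^{d+1})$. The hard part will be the middle step: a literal evaluation of $\hat\cT_{i,k}^\top\tilde\bv_i$ by enumerating the $\binom{k-1}{i-1}$ compositions implicit in \cref{eq:tensorproducts} is exponential in $k$, so the crux is to show that the structure-exploiting scheme of \cite{Kramer2023} — together with the simplification $\hat\bT_1=\bI_n$ guaranteed by \cref{prop:sparse computation}, which renders most Kronecker factors identities — collapses this to $O(k)$ genuine matrix operations of cost $n^{k+1}$. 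Verifying that no intermediate quantity exceeds this size, and that the reshapes and mode orderings conform, is the delicate bookkeeping that carries the proof.
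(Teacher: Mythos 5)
Your overall strategy is the same as the paper's: dispatch the left-hand-side operators $\bN_k$ and $\hat\bN_k(\bI_{n^{k-1}}\otimes\bSigma^2)$ as cheap, count the cross terms $\Vec[\hat\bT_{j}^\top\bSigma^2\hat\bT_{i}]$ as $O(k)$ matrix products of cost $O(n^{k+1})$ each (the contracted dimension being $n$), evaluate the $\hat\cT_{i,k}^\top\tilde\bv_i$ terms without ever forming Kronecker products by using the identity $(\bB^\top\otimes\bA)\Vec[\bD]=\Vec[\bA\bD\bB]$, and note that the top degree dominates. But there is a genuine gap, and you name it yourself: the entire burden of the proposition rests on showing that each $\hat\cT_{i,k}^\top\tilde\bv_i$ and $\hat\cT_{i,k}^\top\tilde\bw_i$ really collapses to $O(k)$ matrix-matrix products of cost $O(n^{k+1})$ with no intermediate blow-up, and you explicitly defer exactly that step (``the delicate bookkeeping that carries the proof''). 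The paper's proof \emph{is} that bookkeeping: it works out the representative case ${\kronF{\bT_1}{d}}^\top\bv_d$ level by level, showing that the $m$th level of the recursion performs $n^{m-1}$ products of reshaped $n^{d-m+1}\times n$ blocks against an $n\times n$ factor, each costing $O(n^{d-m+2})$, so that every level costs $O(n^{d+1})$ and the $d$ levels give $O(dn^{d+1})$; the mixed-factor terms are then handled by the same argument. A submission that stops where yours stops has asserted the proposition rather than proved it.

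There is a second, related omission. The proposition hands you the coefficients $\bv_i,\bw_i$ in the \emph{original} coordinates, whereas \cref{eq:input-normal-9,eq:output-diagonal-9} are written in terms of the transformed coefficients $\tilde\bv_i,\tilde\bw_i$ of \cref{prop:sparse computation}. Forming those equations therefore includes the cost of computing $\tilde\bv_i={\kronF{\bT_1}{i}}^\top\bv_i$ and $\tilde\bw_i={\kronF{\bT_1}{i}}^\top\bw_i$, which your accounting skips entirely (you treat $\tilde\bv_i$ as given). This is not negligible: it is $\Theta(dn^{d+1})$, i.e.\ of the same order as everything you did count, and its analysis constitutes the bulk of the paper's proof. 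Conveniently, it is the special case $\cT_{i,i}^\top\bv_i$ of the very recursion you invoke, with all Kronecker factors equal to $\bT_1$, so carrying out the missing bookkeeping above would cover this step as well (the $O(n^3)$ costs of the square-root factors, the SVD, and forming $\bT_1$ are genuinely negligible and safe to omit); but as written, your tally of ``the real work'' leaves out the largest single piece of it.
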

\begin{proof}
    Assuming we wish to balance energy functions of degree $d$, we evaluate the complexity of computing a degree $k=d-1$ transformation.
    First, we must consider the cost of computing the square-root factors $\bV_2 = \bR \bR^\top$ and $\bW_2 = \bL \bL^\top$ and computing the SVD of $\bL^\top \bR^{-\top} = \cU \bSigma \cV^\top$; these operations are all $O(n^3)$.
    Additionally, the linear transformation coefficient $\bT_1$ is formed by multiplication of two $n \times n$ matrices, again with a cost of $O(n^3)$.

    Next, consider the cost of transforming the energy function coefficients by $\bT_1$, given by
    $\tilde\bv_i=(\bT_1 \otimes \bT_1 \otimes \dots \otimes\bT_1)^\top \bv_i$ and $\tilde\bw_i=(\bT_1 \otimes \bT_1 \otimes \dots \otimes\bT_1)^\top \bw_i$.
    The most expensive coefficient to evaluate is the $d$th coefficient.
    These products can be evaluated efficiently using recursion and the Kronecker-vec relation
    $(\bB^\top \otimes \bA) \Vec[\bD]= \Vec[\bA \bD \bB]$
    \cite{Golub2013}.
    At the first level of recursion, we apply the identity once and compute
        {\nomathindent\begin{align*}
                \cT_{d,d}^\top \bv_{d}  = {\kronF{\bT_1}{d}}^\top \bv_{d}
                 & =
                \Vec\left[{\kronF{\bT_1}{d-1}}^\top {\rm{reshape}}(\bv_{d},n^{d-1},n) \bT_1\right].
            \end{align*}}
    The matrix-matrix product $\bG \coloneqq {\rm{reshape}}(\bv_{d},n^{d-1},n) \bT_1$ costs $O(n^{d+1})$ and results in an $n^{d-1} \times n$ matrix $\bG$.
    We then apply a second level of recursion to compute
    $
        {\kronF{\bT_1}{d-1}}^\top \bG
    $
    using the same Kronecker-vec identity on each of the $n$ columns of $\bG$.
    We reshape each column $\bg_i \in \real^{n^{d-1}}$ as an $n^{d-2} \times n$ matrix to compute
    \begin{align*}
         & {\kronF{\bT_1}{d-1}}^\top \bg_i
        = \Vec\left[{\kronF{\bT_1}{d-2}}^\top {\rm{reshape}}(\bg_i,n^{d-2},n) \bT_1 \right],
    \end{align*}
    which involves matrix-matrix multiplication costing $O(n^{d})$ and results in an $n^{d-2} \times n$ matrix $\bH \coloneqq {\rm{reshape}}(\bg_i,n^{d-2},n) \bT_1$.
    Performing this $n$ times for each column of $\bG$, the second level of recursion costs $O(n^{d+1})$.
    We then use a third level of recursion to compute
    \begin{align*}
         & {\kronF{\bT_1}{d-2}}^\top \bh_i
        = \Vec\left[{\kronF{\bT_1}{d-3}}^\top {\rm{reshape}}(\bh_i,n^{d-3},n) \bT_1 \right]
    \end{align*}
    column by column.
    Each of the $n$ columns involves matrix-matrix multiplication costing $O(n^{d-1})$.
    But each $\bH$ corresponds to one of the $n$ columns of $\bG$, so in total $n^2$ matrix-matrix multiplications costing $O(n^{d-1})$ are performed.
    Hence the third level of recursion costs $O(n^{d+1})$.

    One can see that the $m$th level of recursion involves $n^{m-1}$ matrix-matrix multiplications costing $O(n^{d-m+2})$, so the overall cost of each level of recursion is always $O(n^{d+1})$.
    Since $d$ recursion steps are required, the total cost of transforming the coefficients is $O(dn^{d+1})$.

    We continue considering the cost of forming the remaining terms in the input-normal/output-diagonal equations \cref{eq:input-normal-9,eq:output-diagonal-9}.
    The matrix multiplication $\hat\bT_{j}^\top \bSigma^2 \hat\bT_{i}$ costs $O(n^{d+1})$ since $i+j=d$, and we compute $d$ instances of this quantity in the sum for a total cost of $O(dn^{d+1})$.
    Note that $\sum_{\substack{i,j\geq 1 \\ i+j=d}}\hat\bT_{j}^\top \bSigma^2 \hat\bT_{i} = \hat\cT_{2,d}^\top \tilde\bw_2$, which can be shown using the same Kronecker-vec relation $\Vec[\bA \bD \bB] = (\bB^\top \otimes \bA) \Vec[\bD]$ as before.
    Using similar arguments to those used to far, one can show that in fact all of the terms in the sum $\sum_{i=3}^d  \hat\cT_{i,d}^\top \tilde\bw_i$ can be formed using the Kronecker-vec relation for a total cost of $O(dn^{d+1})$.
    Hence the cost of forming the input-normal/output-diagonal equations is $O(dn^{d+1})$.
\end{proof}
The most expensive coefficient to compute is the final coefficient $\bT_{d-1}$, which contains $n^d$ unknowns.
A naive solution to the dense linear system given by \cref{eq:input-normal-0,eq:output-diagonal-0} requires $O(n^{3d})$ operations.
However, by transforming first with $\bT_1$, the equivalent linear system \cref{eq:input-normal-9,eq:output-diagonal-9} is sparse.
Furthermore, since $\bT_{d-1}$ is a transformation coefficient that acts on $\kronF{\bz}{d-1}$, the rows of $\bT_{d-1}$ are only unique up to an equivalence class corresponding to symmetrization.
Each of the $n$ rows of $\bT_{d-1}$ only needs to contain ${n+d-2}\choose{d-1}$ unique entries, for a total of $n {{n+d-2}\choose{d-1}}$ unknowns in the coefficient $\bT_{d-1}$.
In practice, it was found that the SuiteSparseQR solver \cite{Davis2011} behind \textsc{Matlab}'s built-in ``backslash'' command is sufficient to avoid exceeding $O(dn^{d+1})$ computational complexity.
An alternative is to compute the minimum norm solution, e.g using \textsc{Matlab}'s \texttt{lsqminnorm()} solver;
we did not notice any benefit to using a minimum norm solution instead of the SuiteSparseQR solution on the examples we considered, so we favored the sparse solutions of SuiteSparseQR.

It is difficult to estimate the flop count of sparse solvers, so in \cref{sec:results} we demonstrate empirically that with this approach, forming and solving the linear systems \cref{eq:input-normal-9,eq:output-diagonal-9} scales similarly to $O(dn^{d+1})$.
Notably, computing the energy function coefficients in \cref{eq:vi-coeffs} using Algorithm 1 from \cite{Corbin2023} also has a cost of $O(dn^{d+1})$, so
the cost of computing a polynomial input-normal/output-diagonal transformation using the proposed approach is commensurate with
the cost of computing the energy functions themselves.

\section{Numerical results}\label{sec:results}
In this section, we demonstrate our proposed approach first with a two-dimensional academic example, and then with a scalable system of $N$ nonlinear mass-spring-dampers.
We demonstrate that, when the assumptions underlying the theory are satisfied, our approach produces a polynomial approximation of an input-normal/output-diagonal transformation.
Furthermore, we show that our approach scales similarly to the cost of computing the energy function coefficients.
The results are obtained on a
Linux workstation with an Intel Xeon W-3175X CPU, 256 GB RAM, and \textsc{Matlab} 2021a.
All of the examples, along with the efficient implementation of \cref{prop:sparse computation}, are provided in the \texttt{cnick1/NLbalancing} repository \cite{NLBalancing2023}.

\subsection{Illustrative two-dimensional example}
Consider the following 2D model introduced first in \cite{Fujimoto2001a} and then studied again in \cite{Fujimoto2005,Fujimoto2010}:
\begin{align*}
    \bf(\bx) & = \begin{pmatrix}
                     -9x_1 + 6x_1^2 x_2 + 6x_2^3 - x_1^5 - 2x_1^3 x_2^2 - x_1 x_2^4 \\
                     -9x_2 - 6x_1^3 - 6x_1x_2^2 - x_1^4 x_2 - 2x_1^2 x_2^3 - x_2^5
                 \end{pmatrix}    ,                                                                    \\
    \bg(\bx) & = \left(\begin{matrix}
                           \frac{3\sqrt{2}(9-6x_1x_2+x_1^4-x_2^4)}{9+x_1^4+2x_1^2x_2^2+x_2^4} \\
                           \frac{\sqrt{2}(27x_1^2+9x_2^2+6x_1^3x_2+6x_1x_2^3+(x_1^2+x_2^2)^3}{9+x_1^4+2x_1^2x_2^2+x_2^4}
                       \end{matrix}\right.                                \\
             & \hspace{1.5cm} \left.\begin{matrix}
                                        \frac{\sqrt{2}(-9x_1^2 - 27 x_2^2 + 6 x_1^3 x_2 + 6 x_1 x_2^3 - (x_1^2 + x_2^2)^3)}{9+x_1^4+2x_1^2x_2^2+x_2^4} \\
                                        \frac{3\sqrt{2}(9 + 6 x_1 x_2  - x_1^4 + x_2^4)}{9+x_1^4+2x_1^2x_2^2+x_2^4}
                                    \end{matrix}\right), \\
    \bh(\bx) & = \begin{pmatrix}
                     \frac{2\sqrt{2}(3x_1 + {x_1^2 x_2} + x_2^3)(3 - x_1^4 - 2x_1^2 x_2^2 - x_2^4)}{1 + x_1^4 + 2 x_1^2 x_2^2 + x_2^4} \\
                     \frac{\sqrt{2}(3x_2 - x_1^3 - x_1 x_2^2)(3 - x_1^4 - 2 x_1^2 x_2^2 - x_2^4)}{1 + x_1^4 + 2 x_1^2 x_2^2 + x_2^4}
                 \end{pmatrix}.
\end{align*}
In \cite{Fujimoto2001a}, symbolic computations are used to obtain the following analytical expressions for the controllability and observability energy functions:
{\nomathindent\begin{align*}
    \cE_c(\bx) & =  \frac{1}{2} (x_1^2 + x_2^2) ,                                                                                                                                              \\
    \cE_o(\bx) & = \text{\footnotesize $\frac{36 x_1^2 + 9 x_2^2 + 18 x_1^3 x_2 + 18 x_1 x_2^3 + x_1^6 + 6 x_1^4 x_2^2 + 9 x_1^2 x_2^4 + 4 x_2^6}{2(1 + x_1^4 + 2   x_1^2   x_2^2 + x_2^4)}$}.
\end{align*}}

Using Algorithm 1 from \cite{Corbin2023}, we compute the Taylor expansion of the energy functions by solving the HJB PDEs.
The following results match the Taylor expansions of the analytical energy functions to degree 6 within $O(10^{-13})$:
{\nomathindent\begin{align*}
    \cE_c(\bx) & = \frac{1}{2} (x_1^2 + x_2^2),       \\
    \cE_o(\bx) & = \text{\footnotesize $\frac{1}{2} (
            36 x_1^2
            + 9 x_2^2
            + 18 x_1^3 x_2
            + 18 x_1 x_2^3
            - 35 x_1^6
            - 75 x_1^4 x_2^2
            - 45 x_1^2 x_2^4) .$}
\end{align*}}

Next, we compute the input-normal/output-diagonal transformation $\bx = \bPhi(\bz)$ using \cref{prop:sparse computation}; since $n=2$,
the input-normal/output-diagonal transformation equations are not underdetermined, so the transformation is unique.
Indeed, our transformation is consistent with the results presented in \cite{Fujimoto2001a,Fujimoto2005,Fujimoto2010}:
{\nomathindent\begin{align*}
    \bPhi(\bz) = \begin{bmatrix}
                     z_1 - \frac{1}{3} z_2^3 - \frac{1}{3} z_1^2 z_2 - \frac{1}{18}  z_1^5 + \frac{11}{9} z_1^3 z_2^2 + \frac{5}{6} z_1 z_2^4 \\
                     z_2 + \frac{1}{3} z_1^3 + \frac{1}{3} z_1 z_2^2 - \frac{1}{18}  z_2^5 - \frac{25}{18} z_1^4 z_2 -  z_1^2 z_2^3
                 \end{bmatrix}.
\end{align*}}
The singular value functions are simply given by the observability energy in these transformed coordinates; for this problem, they are $\sigma_1^2(z_1) = 36 - 32 z_1^4$ and $\sigma_2^2(z_2) =  9 - 8 z_2^4$.
This example demonstrates that our proposed approach correctly computes a polynomial input-normal/output-diagonal transformation.
Next we present a scalable example with arbitrary dimension $n$ to demonstrate how the proposed method can be applied to larger models.

\subsection{$N$ coupled Duffing oscillators}
Consider a system of $N$ coupled nonlinear mass-spring-dampers as shown in \cref{fig:example17-msd}.
Instead of linear springs which provide a force proportional to their deformation $F_\text{lin} = -k\Delta x$, we use nonlinear springs with a restoring force $F_\text{nl} = -k\sin \Delta x$, inspired by the pendulum.
Using a cubic approximation $\sin(x) \approx x - \frac{x^3}{6}$ leads to a system of coupled Duffing oscillators.
\begin{figure}[htb]
    \centering
    \includegraphics[width=\columnwidth]{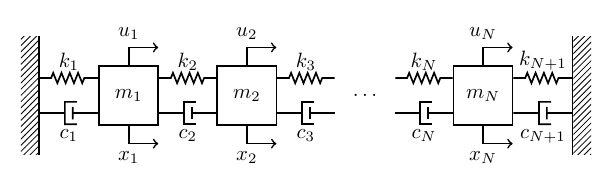}
    \caption{System of $N$ coupled nonlinear mass-spring-dampers.}
    \label{fig:example17-msd}
\end{figure}

For simplicity, we set $m_i = k_i = c_i = 1$; the inputs are forces applied to each mass, and the outputs are the position measurements of each mass.
The system can then be represented by a state-space model of dimension $n=2N$ with cubic polynomial form:
\begin{equation}
    \dot{\bx}  = \bA \bx + \F_3 \kronF{\bx}{3} + \bB \bu, \qquad
    \by        = \bC \bx.
\end{equation}
Since the dynamics are strictly odd, the energy functions will be strictly even, and the input-normal/output-diagonal transformation will be strictly odd as well.

\subsubsection{Diagonalization Performance}
To concisely demonstrate that the transformation puts the energy functions in input-normal/output-diagonal form, \cref{tab:combined-norms} reports various normed quantities.
For the transformed controllability energy, the quadratic coefficient should be equivalent to an identity matrix, and the quartic coefficient should be zero.
For the transformed observability energy, each of the coefficients should be diagonal; hence the off-diagonal entries should be zero.
\begin{table*}[htb]
    \centering
    \caption{Error metrics depicting how close the transformed energy function coefficients are to input-normal/output-diagonal form. The controllability coefficients should be $\tilde\bV_2 = \bI_n$ and $\tilde\bv_4 = \bzero$; the off-diagonal entries in the observability coefficients should be zero.
    }
    \begin{tabular}{ccccc}
        \toprule
        n  & $\left\lVert \tilde\bv_2 - \text{vec}(\bI_n)\right\rVert_2$ & $\left\lVert \tilde\bv_4\right\rVert_2$ & $\left\lVert \text{offdiag}(\tilde\bw_2)\right\rVert_2$ & $\left\lVert \text{offdiag}(\tilde\bw_4)\right\rVert_2$ \\
        \midrule
        6  & \texttt{1.5e-15}                                            & \textbf{\texttt{1.6e-04}}               & \texttt{3.5e-16}                                        & \textbf{\texttt{1.3e-03}}                               \\
        8  & \texttt{3.1e-15}                                            & \texttt{4.8e-16}                        & \texttt{9.5e-16}                                        & \texttt{1.4e-16}                                        \\
        16 & \texttt{4.6e-15}                                            & \texttt{1.0e-14}                        & \texttt{5.9e-14}                                        & \texttt{1.1e-14}                                        \\
        32 & \texttt{8.3e-15}                                            & \texttt{9.7e-14}                        & \texttt{2.5e-12}                                        & \texttt{8.1e-13}                                        \\
        50 & \texttt{1.4e-14}                                            & \texttt{4.3e-13}                        & \texttt{3.8e-12}                                        & \texttt{3.5e-11}                                        \\
        64 & \texttt{1.5e-14}                                            & \textbf{\texttt{1.1e-06}}               & \texttt{6.6e-12}                                        & \textbf{\texttt{3.0e-05}}                               \\
        \bottomrule
    \end{tabular}
    \label{tab:combined-norms}
\end{table*}
As seen in \cref{tab:combined-norms}, the proposed approach works for model dimensions up to $n=50$ for this example.
The cases $n=6$ and $n=64$ illustrate two scenarios in which the assumptions of \cref{thm:inputnormaloutputdiagonal} break down, which ultimately violates the assumptions required to prove existence of solutions to the transformation in \cref{thm:existence}.
Consider first the case of $n=6$;
the Hankel singular values for the linearized system are 1.2071, 0.5000, 0.3536, 0.3536, 0.2500, 0.2071.
\cref{thm:inputnormaloutputdiagonal} requires distinct Hankel singular values, but the middle two Hankel singular values are identical.
As a result, we see in \cref{tab:combined-norms} that the transformed observability energy fails to be diagonalized.

For state dimensions larger than $n=50$ for this example, ill-conditioning due to small Hankel singular values becomes problematic.
Since many Hankel singular values are numerically close to zero, it becomes more likely that two of these very small Hankel singular values are close to each other,
and the distinct Hankel singular value assumption is again violated.
As a result, the computed transformations fail to impose the input-normal/output-diagonal structure, as shown in \cref{tab:combined-norms} for the case of $n=64$.

\cref{fig:n6-2D-3D-projections,fig:n8-2D-3D-projections,fig:n50-2D-3D-projections} depict visualizations of the quartic observability energy coefficient $\tilde\bw_4$ for $n=6,8$ and $50$.
Since $\tilde\bw_4$ is a 4D tensor, it is impossible to visualize it directly; however, since the tensor is symmetric, diagonal structure in 2D or 3D implies diagonal structure in the original dimension and vice versa.
Therefore, we apply the 2-norm along fibers of the tensor to collapse down to dimensions that can be visualized.
In these figures, shade and opacity depict the relative magnitude of each entry in the tensor.

Consider first the $n=6$ case shown in \cref{fig:n6-2D-3D-projections}, which fails to achieve input-normal/output-diagonal structure due to repeated Hankel singular values.
One can see in the 3D and 2D projections of the transformed coefficient $\tilde\bw_4$ that some of the off-diagonal components remain nonzero.
Contrast this with the $n=8$ case shown in \cref{fig:n8-2D-3D-projections}, which exhibits no visible off-diagonal entries in $\tilde\bw_4$.
\begin{figure}[htb]
    \centering
    \begin{subfigure}[h]{0.325\columnwidth}
        \centering
        \includegraphics[width=\textwidth,page=6]{example17_cubeTensorVisualizations.pdf}
        \caption{3D projection of $\bw_4$.}
    \end{subfigure}
    \hfill
    \begin{subfigure}[h]{0.325\columnwidth}
        \centering
        \includegraphics[width=\textwidth,page=8]{example17_cubeTensorVisualizations.pdf}
        \caption{3D projection of $\tilde\bw_4$.}
    \end{subfigure}
    \hfill
    \begin{subfigure}[h]{0.25\columnwidth}
        \centering
        \vspace{.6cm}
        \includegraphics[width=\textwidth,page=7]{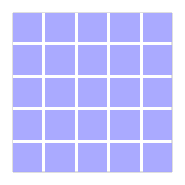}
        \caption{2D projection of $\tilde\bw_4$.}
    \end{subfigure}
    \caption{(Case $n=6$) 3D and 2D projection of $\bw_4$ (original coordinates) and $\tilde\bw_4$ (transformed coordinates), which can be regarded as a symmetric 4D tensor. As a result of repeated Hankel singular values, $\tilde\bw_4$ still has off-diagonal entries.}
    \label{fig:n6-2D-3D-projections}
\end{figure}

\begin{figure}[htb]
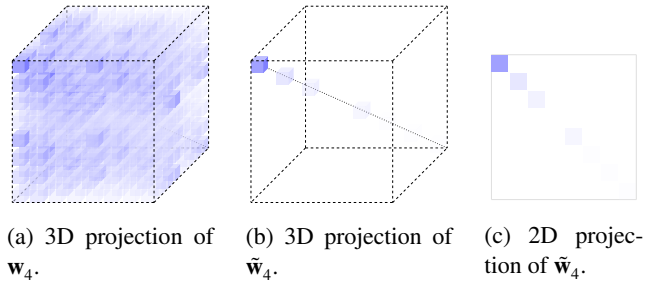

    \centering
    \begin{subfigure}[h]{0.325\columnwidth}
        \centering
        \includegraphics[width=\textwidth,page=10]{example17_cubeTensorVisualizations.pdf}
        \caption{3D projection of $\bw_4$.}
    \end{subfigure}
    \hfill
    \begin{subfigure}[h]{0.325\columnwidth}
        \centering
        \includegraphics[width=\textwidth,page=12]{example17_cubeTensorVisualizations.pdf}
        \caption{3D projection of $\tilde\bw_4$.}
    \end{subfigure}
    \hfill
    \begin{subfigure}[h]{0.25\columnwidth}
        \centering
        \vspace{.6cm}
        \includegraphics[width=\textwidth,page=11]{example17_squareTensorVisualizations.pdf}
        \caption{2D projection of $\tilde\bw_4$.}
    \end{subfigure}
    \caption{ (Case $n=8$) 3D and 2D projection of $\bw_4$ (original coordinates) and $\tilde\bw_4$ (transformed coordinates), which can be regarded as a symmetric 4D tensor.}
    \label{fig:n8-2D-3D-projections}
\end{figure}

Similarly, the $n=50$ case succeeds in diagonalization, as shown in \cref{fig:n50-2D-3D-projections}.
Here we omit the 3D projection since it is too cluttered.
The $n=64$ case, similar to the $n=6$ case, retains off-diagonal components due to the failure to satisfy the distinct Hankel singular values assumption.
\begin{figure}[htb]
    \centering
    \begin{subfigure}[h]{0.475\columnwidth}
        \centering
        \includegraphics[width=\textwidth,page=2]{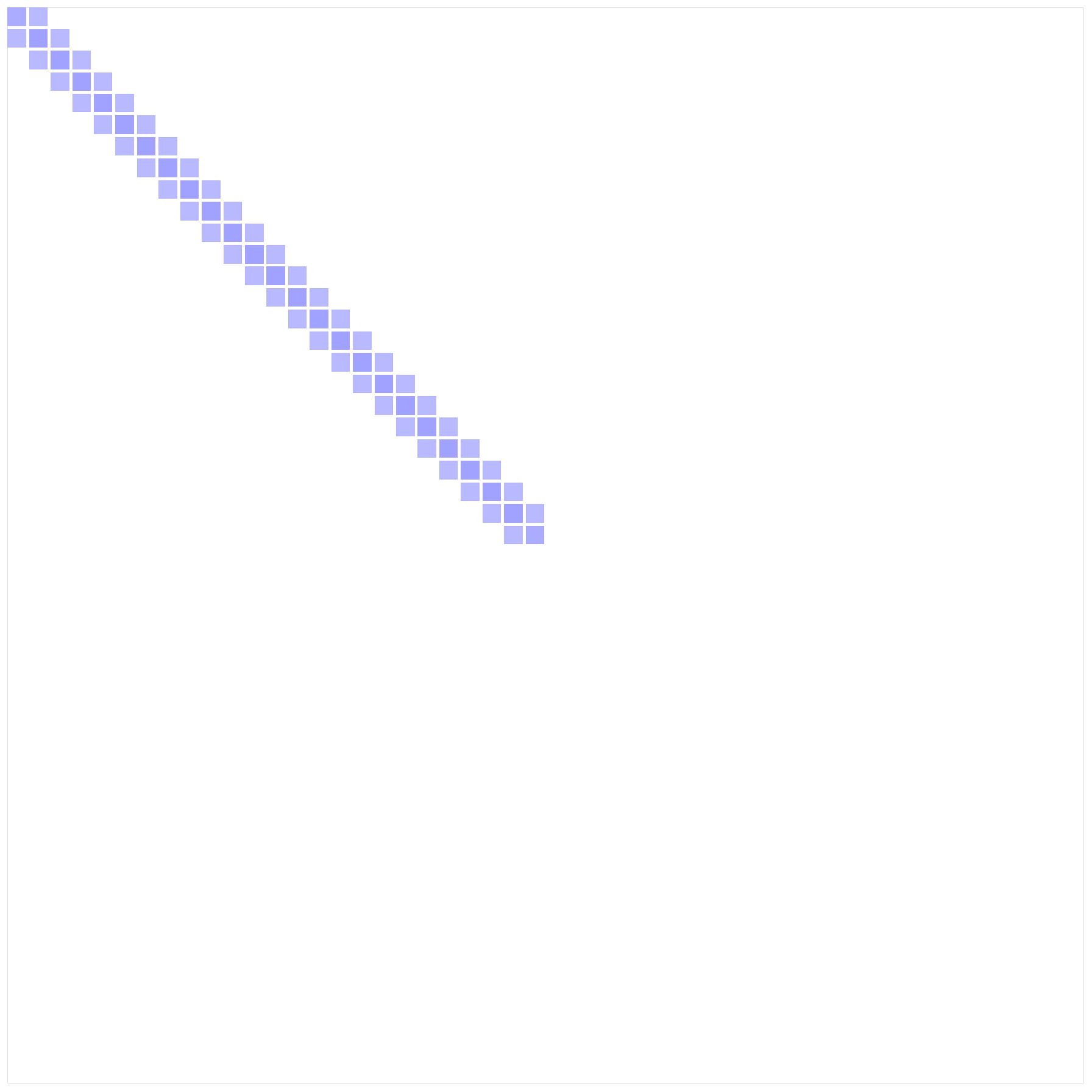}
        \caption{2D projection of $\bw_4$.}
    \end{subfigure}
    \hfill
    \begin{subfigure}[h]{0.475\columnwidth}
        \centering
        \includegraphics[width=\textwidth,page=4]{example17_squareTensorVisualizations_big.pdf}
        \caption{2D projection of $\tilde\bw_4$.}
    \end{subfigure}
    \caption{2D projection of $\bw_4$ (original coordinates) and $\tilde\bw_4$ (transformed coordinates), which can be regarded as a symmetric 4D tensor ($n=50$).}
    \label{fig:n50-2D-3D-projections}
\end{figure}
These results illustrate that, when the necessary assumptions posed by the nonlinear balancing theory are satisfied, our proposed algorithms are able to compute polynomial transformations that induce input-normal/output-diagonal structure.

\subsubsection{Scaling}
In \cref{sec:flops}, we claim that using our proposed approach and exploiting sparsity,
a degree $d-1$ transformation to put the system in input-normal/output-diagonal form to degree $d$
can be computed with a computational complexity of $O(dn^{d+1})$.
Critically, this is on par with the computational complexity of computing a degree $d$ approximation to the energy function, which also scales as $O(dn^{d+1})$.
In this section, we demonstrate that our implementation matches this performance.

\cref{fig:example17} depicts the CPU time
for computing energy function approximations of degree~3, 4, 5, and 6 and the corresponding degree~2, 3, 4, and 5 input-normal/output-diagonal transformations.
Compare the computational time associated with forming \& solving for the transformation with the computational time associated with solving for the energy function coefficients;
we estimate the theoretical scaling of both of these as $O(dn^{d+1})$.
In each of the cases in \cref{fig:example17}, both lines are almost parallel, indicating that they scale at a similar rate.
Thus, our proposed method can compute nonlinear balancing \emph{transformations} with scalability that is comparable to the state-of-the-art methods for computing nonlinear balancing \emph{energy functions}.
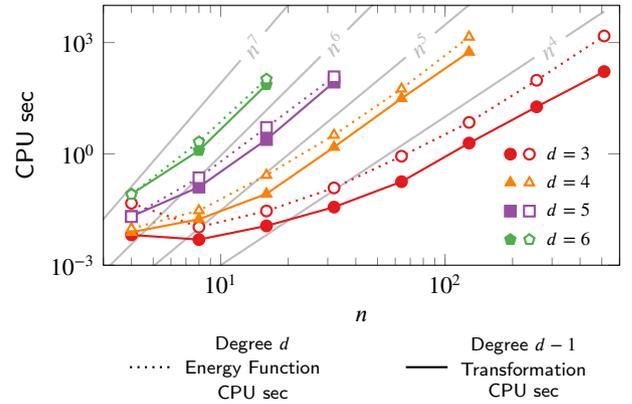
\begin{figure}[htb]
    \centering
    \begin{tikzpicture}
        \begin{loglogaxis}[
                xlabel={$n$},
                ylabel={CPU sec},
                width=\columnwidth,
                height=5cm,
                xmin=3, xmax=600, ymin=1e-3, ymax=1e4,axis on top,
                ytickten={-3,0,3},
                legend pos=south east,
                legend style={draw=none,fill=none,font=\sffamily\scriptsize,cells={align=center}},
            ]
            \addplot[domain=2:512, samples=10, color=lightgray, thick, forget plot] {10^(-7) * x^4} node [pos=.9, fill=white, sloped] {$n^{4}$};
            \addplot[domain=2:512, samples=10, color=lightgray, thick, forget plot] {3*10^(-7) * x^5} node [pos=.66, fill=white, sloped] {$n^{5}$};
            \addplot[domain=2:512, samples=10, color=lightgray, thick, forget plot] {9*10^(-7) * x^6} node [pos=.5, fill=white, sloped] {$n^{6}$};
            \addplot[domain=2:512, samples=10, color=lightgray, thick, forget plot] {8*10^(-6) * x^7} node [pos=.35, fill=white, sloped] {$n^{7}$};

            \addplot[thick,color=Set1-A,mark=*,mark size=2] table[x index=0, y index=2,col sep=&,forget plot] {example17_averagedTimings_d3.dat};
            \addplot[thick,color=Set1-A,mark=*,mark size=2,dotted,mark options={solid, fill=white}] table[x index=0, y index=1,col sep=&] {example17_averagedTimings_d3.dat};

            \addplot[thick,color=Set1-E,mark=triangle*,mark size=2] table[x index=0, y index=2,col sep=&,forget plot] {example17_averagedTimings_d4.dat};
            \addplot[thick,color=Set1-E,mark=triangle*,mark size=2,dotted,mark options={solid, fill=white}] table[x index=0, y index=1,col sep=&] {example17_averagedTimings_d4.dat};

            \addplot[thick,color=Set1-D,mark=square*,mark size=2] table[x index=0, y index=2,col sep=&,forget plot] {example17_averagedTimings_d5.dat};
            \addplot[thick,color=Set1-D,mark=square*,mark size=2,dotted,mark options={solid, fill=white}] table[x index=0, y index=1,col sep=&] {example17_averagedTimings_d5.dat};

            \addplot[thick,color=Set1-C,mark=pentagon*,mark size=2] table[x index=0, y index=2,col sep=&,forget plot] {example17_averagedTimings_d6.dat};
            \addplot[thick,color=Set1-C,mark=pentagon*,mark size=2,dotted,mark options={solid, fill=white}] table[x index=0, y index=1,col sep=&] {example17_averagedTimings_d6.dat};
        \end{loglogaxis}

        \node at (3.4, -0.6) [anchor=north] {
            \begin{tikzpicture}
                \matrix[row sep=3mm,column sep=0.5mm] {
                    \draw[thick, black, dotted] (0,0) -- (0.55,0); & \node[anchor=west] {\sffamily\scriptsize \shortstack{Degree $d$ \\ Energy Function \\ CPU sec}};
                                                                   &                                                                    &  &  &  &  &  &  &  &  &  &  &  &  &  &  &  &  &  &  &
                    \draw[thick, black] (0,0) -- (0.55,0);         & \node[anchor=west] {\sffamily\scriptsize \shortstack{Degree $d-1$ \\ Transformation \\ CPU sec}};                                                         \\
                };
            \end{tikzpicture}
        };

        \node at (5.9, 1.9) [anchor=north] {
            \begin{tikzpicture}
                \matrix[row sep=-0.15mm,column sep=0.5mm] {
                    \draw plot[only marks,mark=*,mark size=2,mark options={thick,fill=Set1-A,draw=Set1-A}] coordinates{(0.75,-0.02)};
                    \draw plot[only marks,mark=*,mark size=2,mark options={thick,fill=white,draw=Set1-A}] coordinates{(1,-0.02)};
                     & \node[anchor=west] {\sffamily\scriptsize $d=3$}; \\
                    \draw plot[only marks,mark=triangle*,mark size=2,mark options={thick,fill=Set1-E,draw=Set1-E}] coordinates{(0.75,-0.02)};
                    \draw plot[only marks,mark=triangle*,mark size=2,mark options={thick,fill=white,draw=Set1-E}] coordinates{(1,-0.02)};
                     & \node[anchor=west] {\sffamily\scriptsize $d=4$}; \\
                    \draw plot[only marks,mark=square*,mark size=2,mark options={thick,fill=Set1-D,draw=Set1-D}] coordinates{(0.75,-0.02)};
                    \draw plot[only marks,mark=square*,mark size=2,mark options={thick,fill=white,draw=Set1-D}] coordinates{(1,-0.02)};
                     & \node[anchor=west] {\sffamily\scriptsize $d=5$}; \\
                    \draw plot[only marks,mark=pentagon*,mark size=2,mark options={thick,fill=Set1-C,draw=Set1-C}] coordinates{(0.75,-0.02)};
                    \draw plot[only marks,mark=pentagon*,mark size=2,mark options={thick,fill=white,draw=Set1-C}] coordinates{(1,-0.02)};
                     & \node[anchor=west] {\sffamily\scriptsize $d=6$}; \\
                };
            \end{tikzpicture}
        };
    \end{tikzpicture}
    \caption{Scaling of CPU time for both the energy function computation and the transformation computation as $n$ increases for $d=3,4,5,6$ approximations.
        The transformation CPU time scales similarly to the energy function CPU time, which is $O(dn^{d+1})$.}
    \label{fig:example17}
\end{figure}

\section{Conclusion}\label{sec:conclusion}
We have developed and analyzed a tensor-based approach to computing polynomial input-normal/output-diagonal transformations required for nonlinear balanced truncation model reduction.
The code is freely available online in the \texttt{cnick1/NLbalancing} repository \cite{NLBalancing2023}.
Our method involves solving a set of underdetermined algebraic equations whose solutions are the coefficients associated with the transformation.
We derive the explicit form for the equations, prove the existence of solutions to those equations, and devise an efficient two-step computational scheme for computing the transformation through \cref{prop:sparse computation}.
In addition to proving the existence of solutions to the equations for the transformation equations and discussing the family of solutions that exist, we analyze the computational complexity of the proposed approach and demonstrate its scalability on examples.
As evident from the numerical results, the computational time associated with solving for the polynomial transformation via our approach scales at a similar rate as solving for the energy functions using the approach of \citet{Corbin2023}, representing a significant step towards a viable implementation of nonlinear balanced truncation model reduction for control-affine nonlinear systems.

The primary direction of future work involves constructing balanced nonlinear ROMs in the transformed coordinates and comparing with other model reduction techniques.
In this work, we have presented a method for computing the full transformation to put the system in input-normal/output-diagonal form; similar to the linear case, we found that computing the full transformation is often numerically ill-conditioned for large models due to the presence of small Hankel singular values.
Hence, as in the linear case, we plan to develop a reduced transformation approach (sometimes referred to balance-\emph{and}-reduce as opposed to balance-\emph{then}-reduce) for the nonlinear case.

\section*{Acknowledgments}
This work was supported by the National Science Foundation under
Grant CMMI-2130727.
The research of Arijit Sarkar is part of the project Digital Twin P18-03 project 1 of the research program Perspectief which is (mainly) financed by the Dutch Research Council (NWO).
\printcredits

\bibliographystyle{cas-model2-names}
\bibliography{references}

\end{document}